\numberwithin{equation}{section}
\def\ep{\epsilon}
\def\sub{\subset}
\newcommand{\R}{{\mathbb{R}}}
\newcommand{\Z}{{\mathbb{Z}}}
\newtheorem{ass}{Assumption}
\newtheorem{df}{Definition}
\newtheorem{lem}{Lemma}
\newtheorem{prop}{Proposition}
\newtheorem{rem}{Remark}
\newtheorem{thm}{Theorem}
\title{Long-term regularity of 2D gravity water waves}
\author{Fan Zheng \footnote{\textsc{Fan Zheng}, ICMAT, Calle Nicol\'as Cabrera 13--15, Campus Cantoblanco, Universidad Aut\'onoma de Madrid, 28049 Madrid Spain, fan.zheng@icmat.es.
\newline
The research of \textsc{Fan Zheng} was partially supported by ERC (European Research Council) under Grant 788250.}
}
\date{}
\begin{document}

\maketitle
\renewcommand{\thefootnote}{}
\renewcommand{\thefootnote}{\arabic{footnote}}

\begin{abstract}
The two dimensional gravity water wave problem concerns the motion of an incompressible fluid occupying half the 2D space and flowing under its own gravity. In this paper we study long-term regularity of solutions evolving from small but non-localized initial data.

Our main result is that if the $H^s$ norm of the initial data is $\ep$,
where $s \gg 1$ and $\ep \ll 1$, then the equation is wellposed at least for a time proportional to $\ep^{-4}$, improving on the $\ep^{-3}$ lifespan obtained in \cite{BeFePu,Wu2DL}. We also study period water waves and show a lifespan bridging the gap between non-periodic waves and waves with a period of 1.
\end{abstract}

\pagebreak
\tableofcontents
\pagebreak
\section{Introduction}

The gravity water wave equation describes the motion of an incompressible, irrotational, inviscid fluid occupying a region
\begin{equation}\label{Om-def}
\Omega(t) = \{(x, y): y < h(x, t)\}
\end{equation}
with a free boundary $\Gamma(t)$ that is the graph of $h(\cdot, t)$.
The fluid moves under the action of gravity, normalized to be of unit strength, and that of pressure, assumed to be zero on the boundary $\Gamma(t)$. The equation of motion is then
\begin{align}
\nabla_{x,y} \cdot v &= 0, \tag{incompressibility}\label{incomp}\\
\nabla_{x,y} \times v &= 0, \tag{irrotationality}\label{irrot}\\
v_t + v \cdot \nabla_{x,y}v &= -\nabla_{x,y}p - (0, 1), \tag{Euler equation}\label{Euler}\\
\partial_t + v \cdot \nabla_{x,y}&\text{ is tangent to $(t, \Gamma(t))$}
\tag{motion of the boundary}\label{boundary}
\end{align}
By \ref{incomp} we can write $v = \nabla_{x,y}\Psi$, where $\Psi$ is the velocity potential (up to a constant), which is harmonic by \ref{irrot} and is thus determined by its boundary value $\psi(x, t) = \Psi(x, h(x), t)$. The \ref{Euler} can then be recast in the Zakharov formulation (see Section 1.1.4 of \cite{La-rev}):
\begin{equation}\label{Zakharov}
\begin{cases}
h_t = G(h)\psi,\\
\psi_t = -h - \frac{1}{2}|\nabla\psi|^2 + \frac{(G(h)\psi + \nabla h \cdot \nabla\psi)^2}{2(1 + |\nabla h|^2)}
\end{cases}
\end{equation}
where $G(h)\psi = \sqrt{1 + |\nabla h|^2}\partial_n\Psi$ is the Dirichlet-to-Neumann operator. The energy
\begin{equation}\label{E-def}
E = \int \frac{1}{2} (\psi G(h)\psi + h^2)dx
\end{equation}
is conserved, see Section 6.3.1 of \cite{La-rev}. As the vorticity is transported by the flow as in the Euler equation, the flow remains irrotational if it is initially so.

\subsection{Background}
Due to space limit, we will include only a small part of the literature.
The reader is directed to the bibliography, especially \cite{BrGrNi-rev,IoPu-rev,La-rev}, for more references.

The study of water waves has its root in Newton \cite{Newton}, Stokes \cite{Stokes} and Levi-Civita \cite{LC}. Local wellposedness of gravity water waves in the Euclidean space was first shown by Nalimov \cite{Na} and Shinbrot \cite{Shin}, assuming the Taylor sign condition \cite{Tay}. Then Wu \cite{Wu2DL,Wu3DL} dropped this condition, only assuming that the interface is non self-intersecting. All these results are local, valid only for a time period inversely proportional to the size of the initial data.

Global wellposedness of gravity water waves in three dimensions for small data was shown by Germain--Masmoudi--Shatah \cite{GMS} and Wu \cite{Wu3DG}.
The same problem in two dimensions is harder, due to weaker decay of the solution. The first result in this vein is that of almost global wellposedness by Hunter--Ifrim--Tataru \cite{HuIfTa2DaG} and Wu \cite{Wu2DaG}, who showed a lifespan exponential in terms of the reciprocal of the size of the initial data. Later global wellposedness was obtained by Alazard--Delort \cite{AlDe}, Ionescu--Pusateri \cite{IoPu} and Ifrim--Tataru \cite{IfTa2DG}.

Of the results above, the local ones assume only unweighted Sobolev norms of the initiial data, while the global ones also presuppose that the initial data decays far away from the origin by requiring it to be also small in a weighted Sobolev space, which allows for $1/t$ decay of the solution, and with more careful analysis, closes the estimates needed for global existence. Without that assumption of locality, only part of the argument survives, giving a lifespan of $\ep^{-2}$ in three dimensions. In two dimensions additional integrability in the equations can be exploited to extend the lifespan to $\ep^{-3}$, both in the Euclidean case \cite{Wu2DLL} and in the periodic case \cite{BeFePu}. In \cite{Z} the author combined the energy estimates and Strichartz estimates to extend the lifespan of three dimensional water wavaes to an almost global one.

\subsection{Plan of action}
Our goal is to study the stability of the system (\ref{Zakharov}) with respect to a small perturbation around its trivial equilibrium $(h, \psi) = (0, 0)$. The first step is to linearize the equation. To that end we define some Fourier multipliers.
\begin{df}
\begin{equation}\label{La-def}
\widehat{|\nabla|u}(\xi) = |\xi|\hat u(\xi), \quad
\widehat{\Lambda u} = \widehat{|\nabla|^{1/2}u} = |\xi|^{1/2}\hat u(\xi).
\end{equation}
\end{df}

To the first order, $G(h)\psi = |\nabla|\psi$ (see Theorem 2.1.1 in \cite{AD}), so the linearization of (\ref{Zakharov}) is
\begin{equation}\label{linearization}
\begin{cases}
h_t = |\nabla|\psi,\\
\psi_t = -h,
\end{cases}
\end{equation}
whose eigenvectors $u_\pm = h \pm i|\nabla^{1/2}\psi$ evolve according to the equation
\begin{equation}\label{Ut}
\partial_tu_\pm = e^{\mp it\Lambda}u.
\end{equation}
This reveals the dispersive nature of the equation and suggests that it is amenable to $L^2$ energy estimates and $L^\infty$ decay estimates. Indeed,
energy estimates has been worked out in \cite{AD} and we only need to quote their results in Proposition \ref{EneEst} below. It roughly says
\begin{equation}\label{Hs-dt}
\frac{d}{dt}\|u\|_{H^s}^2 \lesssim \|u\|_{H^s}^2\|u\|_{C^r}^2.
\end{equation}
Thus the growth of the energy is controlled by the spacetime norm $\|u\|_{L_t^2C^r}$.

Now we focus on two dimensioanl water waves, the topic of this paper.
Since $h$, $\psi$ and $u$ are all functions of one variable,
the standard decay estimate implies that $\|u(t)\|_{L^\infty}$ decays like $t^{-1/2}$, provided that $u(0) \in L^1$. The point of this paper, however, is to drop this assumption and only presuppose Sobolev norms on the initial data. In this setting, $L^\infty$ decay estimates are replaced by Strichartz-type spacetime norm estimates, specifically the $L^4L^\infty$ norm in two dimensions, see Lemma \ref{Strichartz}. Then by H\"older's inequality,
\begin{equation}\label{L2L4}
\|u\|_{L_t^2([0,T])C^r}^2 \lesssim \sqrt T\|u\|_{L_t^4([0,T])C^r}^2
\lesssim \sqrt T\|u(0)\|_{H^s}^2.
\end{equation}
To close the energy estimate (\ref{Hs-dt}), we require the right-hand side of (\ref{L2L4}) to be bounded by 1, whence the lifespan $T \approx \|u(0)\|_{H^s}^{-4}$.

The dispersive estimate is done in a similar way: we need to close the estimate of the spacetime norm with two factors of the $L^\infty$ norm on the right-hand side. Since the nonlinear terms in (\ref{Zakharov}) are quadratic, a normal form transformation is applied to make them cubic, taking advantage of the fact that the dispersion relation does not give rise to three wave resonance, i.e.,
\begin{equation}\label{Phi-def}
\Phi_{\mu\nu}(\xi_1, \xi_2) = \sqrt{|\xi_1 + \xi_2|} - \sqrt{|\xi_1|} - \sqrt{|\xi_2|} \neq 0
\end{equation}
unless $\xi_1$ or $\xi_2$ or $\xi_1 + \xi_2 = 0$, in which case we can exploit the structure of the nonlinearity to show that it does not actually matter. Let $N$ denote the nonlinearity, see (\ref{N-def}). Then we can use the following Strichartz estimates:
\begin{equation}
\begin{aligned}
\|u\|_{L_t^4C^r} &\lesssim \|u(0)\|_{H^{r+3/8}} + \|N\|_{L_t^1H^{r+3/8}},\\ \|N\|_{L_t^1H^{r+3/8}} &\le \sqrt T\|N\|_{L_t^2H^{r+3/8}}
\lesssim \|u\|_{L_t^4C^r}^2\|u\|_{L_t^\infty H^s},
\end{aligned}
\end{equation}
to closed the estimate, obtaining a lifespan of $\ep^{-4}$, see Theorem \ref{Thm1}.

While the whole structure of the proof resembles that in \cite{Z}, here we aim not only to improve on known results on lifespans of two dimensional water waves, but also to show that the framework established in \cite{Z} is easily adaptable, and specifically, that the wealth of estimates already present in the literature, for example \cite{AD,GMS}, can be readily assembled to yield a short proof of previously inaccessible results.

It should be added however, that in the energy estimates, one can have three factors of $L^\infty$ norms on the right-hand side of (\ref{Hs-dt}),
using additional integrability in two dimensions \cite{BeFePu,Wu2DL},
but this extra saving does not easily carry over to dispersive estimates,
because the trivial four wave resonances ($\sqrt{|\xi_1|} + \sqrt{|\xi_2|} - \sqrt{|\xi_1|} - \sqrt{|\xi_2|} = 0$) lead to modified scattering \cite{IoPu}. Controlling this effect seems to require more regularity in the frequency space, i.e., weights in the physical space, which is out of the scope of this paper.

Last but not least, we also treat the case of $R$-periodic water waves,
and improve on previous results in the case when $R > \ep^{-2}$, see Theorem \ref{Thm2}.

\subsection{Main results}
\begin{thm}\label{Thm1}
Let $s > 17.5$. Then there is a constant $c > 0$ such that for any initial data $(h_0, \psi_0)$ such that $h_0 \in H^s$, $|\nabla|^{1/2}\psi_0 \in H^{s-1/2}$, $|\nabla|^{1/2}w_0 \in H^s$ and
\begin{equation}\label{initial}
\|h_0\|_{H^s} + \||\nabla|^{1/2}w_0\|_{H^s} = \ep < c
\end{equation}
then there is a solution $(h, \psi) \in C([0, T], H^s \times H^{s-1/2})$ with $|\nabla|^{1/2}w \in C([0, T], H^s)$, where $T = c/\ep^4$.
\end{thm}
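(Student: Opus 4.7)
The plan is a standard bootstrap argument on two quantities simultaneously: the energy norm $E(t) = \|u(t)\|_{H^s}$ and the Strichartz norm $S(T) = \|u\|_{L^4_t([0,T]) C^r}$ (for a suitable $r$ chosen so that $r + 3/8 < s$, which forces the regularity threshold $s > 17.5$ after accounting for derivative loss in the normal form). I would postulate the bootstrap hypothesis $E(t) \le 2\ep$ and $S(T) \le C\ep$ on a maximal interval $[0, T_*]$ with $T_* \le c\ep^{-4}$, and aim to strictly improve both bounds to close it by continuity.

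First, for the energy, I would directly feed the bootstrap hypothesis into the Alazard--Delort-type inequality (\ref{Hs-dt}) quoted in Proposition \ref{EneEst}. Integrating and using H\"older as in (\ref{L2L4}),
\begin{equation*}
\int_0^T \|u\|_{C^r}^2 \, dt \le \sqrt{T}\, S(T)^2 \le \sqrt{c}\cdot C^2 \ep^2 \cdot \ep^{-2} = \sqrt{c}\, C^2,
\end{equation*}
so for $c$ small the energy grows by at most a constant factor arbitrarily close to $1$, improving $E(t) \le 2\ep$ to something like $(1+\sqrt{c})\ep$. Second, for the Strichartz bound, I would apply a normal form transformation to the Zakharov system (\ref{Zakharov}) rewritten in terms of $u_\pm$, dividing by the phase $\Phi_{\mu\nu}$ of (\ref{Phi-def}); this absorbs the quadratic part of the nonlinearity and leaves a cubic remainder $N$. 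I would then invoke Lemma \ref{Strichartz} to get
\begin{equation*}
\|u\|_{L^4_t C^r} \lesssim \|u(0)\|_{H^{r+3/8}} + \|N\|_{L^1_t H^{r+3/8}},
\end{equation*}
and estimate the Duhamel term by $\sqrt{T}\,\|N\|_{L^2_t H^{r+3/8}} \lesssim \sqrt{T}\, S(T)^2 E(T) \lesssim \sqrt{c}\,\ep$, which improves $S(T) \le C\ep$ once $C$ is chosen large relative to the implicit constant in Lemma \ref{Strichartz} and $c$ is small.

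The main technical obstacle is carrying out the normal form and keeping track of the derivative loss it produces, so that the transformed variable is still essentially equivalent to $u$ in both the $H^s$ and $L^4_t C^r$ norms. The principal symbol divided by $\Phi_{\mu\nu}$ could be singular at the trivial resonances $\xi_1 = 0$, $\xi_2 = 0$, $\xi_1 + \xi_2 = 0$, so one has to exploit the null structure of the water wave nonlinearity (the numerator vanishes on these sets to sufficient order) to see that the transformed system and the resulting cubic $N$ obey precisely the multilinear bounds needed in the two displayed inequalities above. A secondary concern is that the local well-posedness theory must be invoked in a form compatible with these norms, so one first runs the usual local existence (valid on time scales $\ep^{-1}$) and then extends it by the a priori estimates established through the bootstrap, yielding the advertised lifespan $T = c\ep^{-4}$.
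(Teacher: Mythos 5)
Your proposal is correct and follows essentially the same strategy as the paper: a continuity/bootstrap argument on the pair $(\sup_t E_s(t),\ \|u\|_{L^4_t C^\rho})$, closing the energy via the Alazard--Delort quartic energy inequality together with H\"older in time, and closing the Strichartz norm via a normal form (in the paper, integration by parts in time in Duhamel's formula) that eliminates the non-resonant quadratic nonlinearity and produces a cubic remainder whose $L^1_t H^{\rho+3/8}$ norm carries the decisive $\sqrt{T}$ factor. The points you flag as ``main obstacles'' (derivative loss in dividing by $\Phi_{\mu\nu}$, null structure near the trivial resonances, and the need for the transformed variable to be comparable to $u$ in both $H^s$ and $L^4_t C^\rho$) are exactly what Lemmas~\ref{Q-Hr}, \ref{N3-Hs} and Propositions~\ref{Q-L4Loo}, \ref{u3-L4Loo}, \ref{C-L4Loo} handle in the paper, and the threshold $s > 17.5$ indeed accrues from these losses; the only unelaborated detail in your outline is that the energy is actually run on the good unknown $w = \psi - T_B h$ (hence $E_s$ rather than $\|u\|_{H^s}$) and then transferred to $u$ at level $s - 1/2$, a quasilinear bookkeeping point rather than a change of method.
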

\begin{rem}
$w$ is a quantity with similar estimates to $\psi$, see Definition \ref{Tfg-def}.
\end{rem}

\begin{thm}\label{Thm2}
Let $s > 17.5$. Then there is a constant $c > 0$ such that for any $R$-periodic initial data $(h_0, \psi_0)$ satisfying $h_0 \in H^s$, $|\nabla|^{1/2}\psi_0 \in H^{s-1/2}$, $|\nabla|^{1/2}w_0 \in H^s$ and (\ref{initial}), then there is a solution $(h, \psi) \in C([0, T], H^s \times H^{s-1/2})$ with $|\nabla|^{1/2}w \in C([0, T], H^s)$, where
\begin{equation}\label{lifespan}
T =
\begin{cases}
c\ep^{-3}, & 1 \le R \le \ep^{-2},\\
c\sqrt R\ep^{-2}, & \ep^{-2} < R \le \ep^{-4},\\
c\ep^{-4}, & R > \ep^{-4}.
\end{cases}
\end{equation}
\end{thm}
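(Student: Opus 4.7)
The plan is to rerun the bootstrap of Theorem \ref{Thm1} on the periodic line $\R/R\Z$, replacing the Strichartz estimate of Lemma \ref{Strichartz} by a periodic analogue and splitting into three cases according to the size of $R$. The energy estimate Proposition \ref{EneEst} and its cubic refinement $\frac{d}{dt}\|u\|_{H^s}^2 \lesssim \|u\|_{H^s}^2 \|u\|_{C^r}^3$ of \cite{BeFePu,Wu2DL} are based on local commutator-type computations and carry over to $\R/R\Z$ without change; only the dispersive estimate requires new input.

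For the periodic Strichartz inequality I will show that
\begin{equation*}
\|e^{it\Lambda} u_0\|_{L^4([0,T]) C^r(\R/R\Z)} \lesssim \|u_0\|_{H^{r+3/8}(\R/R\Z)}
\quad\text{uniformly for } T \lesssim R,
\end{equation*}
with some modest $(T/R)^\alpha$ loss beyond this range. The time scale $T \sim R$ is the natural period-crossing time of a unit-frequency wavepacket, whose group velocity $|\xi|^{-1/2}$ is $\sim 1$. Concretely, Poisson summation writes the periodic dispersive kernel as a sum of translates of the Euclidean kernel $\widetilde K(x,t)$, for which stationary phase on each frequency band gives $|\widetilde K(x,t)|\lesssim t/|x|^{3/2}$; the non-identity translates then contribute $O(t/R^{3/2})$ pointwise, which is negligible for $t\lesssim R$.

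With these ingredients I close the bootstrap in three cases. If $R \le \ep^{-2}$, I discard Strichartz and use only the cubic energy estimate with the trivial bound $\|u\|_{C^r} \lesssim \|u\|_{H^s} \lesssim \ep$: then $\int_0^T \|u\|_{C^r}^3 \lesssim T\ep^3 \lesssim 1$ closes up to $T \lesssim \ep^{-3}$, recovering \cite{BeFePu,Wu2DL}. If $R \ge \ep^{-4}$, the target $T = \ep^{-4}$ satisfies $T \le R$, so the periodic Strichartz is lossless on $[0,T]$ and the proof of Theorem \ref{Thm1} applies verbatim. If $\ep^{-2} < R < \ep^{-4}$, I partition $[0,T]$ into $\sim T/R$ sub-intervals of length $R$; on each, lossless Strichartz combined with H\"older in time gives $\|u\|_{L^2 C^r}^2 \lesssim \sqrt R \cdot \ep^2$. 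Summing across sub-intervals yields $\|u\|_{L^2([0,T]) C^r}^2 \lesssim (T/R)\sqrt R\,\ep^2 = T R^{-1/2} \ep^2$, so the two-factor energy estimate (\ref{Hs-dt}) closes provided $T \lesssim \sqrt R\,\ep^{-2}$, matching (\ref{lifespan}).

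The hard part will be the periodic Strichartz estimate of the second paragraph; the Poisson summation argument must be carried out with proper frequency localization, since the Euclidean kernel is only a tempered distribution, and the corrections must be tracked quantitatively. Once this is done, the three-regime bootstrap is essentially bookkeeping, and a pleasant feature of the argument is that the three lifespans in (\ref{lifespan}) require only the lossless range $T \le R$ of the Strichartz bound, so no sharp analysis of the regime $T > R$ is needed.
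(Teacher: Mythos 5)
Your plan follows the paper's three-regime bootstrap essentially line for line: the energy estimate is unchanged, a periodic Strichartz inequality is established, and the three lifespans are extracted by the same cases; so the only piece worth scrutinizing is your proposed proof of the periodic Strichartz lemma.

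The paper's Lemma \ref{Strichartz-R} carries the explicit loss $\sqrt[4]{1+T/R}$ and is proved frequency by frequency: a Littlewood--Paley piece at frequency $2^k$ has group velocity $\approx 2^{-k/2}$ and hence period-crossing time $\approx 2^{k/2}R$, not $R$; one partitions $[0,T]$ into $O(1+2^{-k/2}T/R)$ segments of that frequency-dependent length and sums using $L^2$ conservation. Your Poisson-summation heuristic assigns a single period-crossing time $\sim R$ to the whole flow, which is the $k=0$ figure only. For $k<0$ the identity translate of the localized kernel $\widetilde K_k(\cdot,t)$ is concentrated near $|x|\approx 2^{-k/2}t$, which for $t\sim R$ lies far outside the fundamental domain $[-R/2,R/2]$, and the translate that actually lands there is the one with $j\approx 2^{-k/2}t/R\gg1$; the unlocalized bound $|\widetilde K(x,t)|\lesssim t/|x|^{3/2}$ masks this because it sums over all bands. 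The claim you ultimately want --- losslessness into $H^{s+3/8}$ for $T\le R$ --- is nevertheless true (it is the $T\le R$ specialization of the paper's lemma), because the low-frequency degradation replaces $2^{3k/8}$ by $2^{k/4}$, which for $k<0$ is still $\lesssim\langle 2^k\rangle^{3/8}\approx1$; but to see that cancellation you essentially have to reproduce the paper's frequency-localized computation, so Poisson summation does not give you a shortcut here, and your second paragraph as written has a real gap at low frequency.

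The rest is equivalent bookkeeping. Partitioning $[0,T]$ into $O(T/R)$ blocks of length $R$, applying the lossless bound per block, and resumming in $\ell^4$ is the same as carrying $\sqrt[4]{1+T/R}$ through a single global bootstrap, and both give the lifespans in \eqref{lifespan}; your observation that only the $T\le R$ range of the Strichartz bound is ever used is a clean way to phrase this. The regime $1\le R\le\ep^{-2}$ is handled in both your proposal and the paper by citing the cubic energy estimate of \cite{BeFePu,Wu2DL}.
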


\subsection{Organization}
Section \ref{GhSec} collects some basic estimates of the Dirichlet-to-Neumann operator $G(h)\psi$. In Sections \ref{EneSec} and \ref{StrSec} we obtain the energy estimates and the Strichartz estimates.
Theorems \ref{Thm1} and \ref{Thm2} are shown in Sections \ref{Thm1prf} and \ref{Thm2prf}.

\section{Estimating the Dirichlet-to-Neumann map}\label{GhSec}
\begin{df}
For $\gamma \in \R$ let $C_*^\gamma$ denote the Besov space $B^\gamma_{\infty,\infty}$.
\end{df}

We need estimates on the Sobolev and Besov norms of $G(h)\psi$ and several related quantities.

\begin{df}[See (4.35)--(4.37) in \cite{Z}]
\begin{equation}\label{Gh-int}
\begin{aligned}
G(h)\psi &= |\nabla|\psi + \int_0^1 \partial_sG(sh)\psi ds,\\
\partial_sG(sh)\psi &= -G(sh)[hB(sh)\psi] - (hV(sh)\psi)',\\
B = B(h)\psi &= \frac{G(h)\psi + h'\psi'}{1 + h'^2},\\
V = V(h)\psi &= \psi' - h'B(h)\psi.
\end{aligned}
\end{equation}
\end{df}

\begin{lem}[Lemma 2.0.5 in \cite{AD}]\label{Gh-Cr}
Let $\gamma > 3$ be such that $2\gamma \notin \Z$. Then for all $(h, |\nabla|^{1/2}\psi) \in C_*^\gamma \times C_*^{\gamma-1/2}$ such that $\|h'\|_{C_*^{\gamma-1}} + \|h'\|_{C_*^{-1}}^{1/2}\|h'\|_{H^{-1}}^{1/2} \le c_r$, we have $\|G(h)\psi\|_{C_*^{\gamma-1}} + \|B\|_{C_*^{\gamma-1}} + \|V\|_{C_*^{\gamma-1}} \lesssim_r \||\nabla|^{1/2}\psi\|_{C_*^{\gamma-1/2}}$.
\end{lem}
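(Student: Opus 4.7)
The plan is a continuity argument in $s$ applied to the homotopy $sh$ via the integral representation (\ref{Gh-int}). Setting
\[
N(s) := \|G(sh)\psi\|_{C_*^{\gamma-1}} + \|B(sh)\psi\|_{C_*^{\gamma-1}} + \|V(sh)\psi\|_{C_*^{\gamma-1}},
\]
one has $N(0) \lesssim \||\nabla|^{1/2}\psi\|_{C_*^{\gamma-1/2}}$, since at $s = 0$ the Dirichlet--Neumann operator reduces to $|\nabla|$ and $B(0)\psi = \psi'$, $V(0)\psi = \psi'$. The goal is to close a Gr\"onwall-type inequality for $N(s)$ on $[0, 1]$.

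I would first bound $B$ and $V$ in terms of $G(sh)\psi$ via the identities $B = (G(sh)\psi + sh'\psi')/(1 + (sh')^2)$ and $V = \psi' - sh'B$, together with Moser-type product and composition estimates in $C_*^{\gamma-1}$ (clean since $\gamma - 1 > 2$). To invert $1 + (sh')^2$ one needs smallness of $h'$, supplied by the hypothesis both at high frequency through $\|h'\|_{C_*^{\gamma-1}}$ and at low frequency through the interpolated term $\|h'\|_{C_*^{-1}}^{1/2}\|h'\|_{H^{-1}}^{1/2}$, essentially an $L^2$-type bound on $h'$. Next I would plug $\partial_s G(sh)\psi = -G(sh)[hB(sh)\psi] - (hV(sh)\psi)'$ into (\ref{Gh-int}). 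The term $(hV)'$ costs one Besov derivative and is handled by a paraproduct estimate at cost $\|h\|_{C_*^\gamma}\|V\|_{C_*^{\gamma-1}}$. For $G(sh)[hB]$ I would invoke the paralinearization of the Dirichlet--Neumann operator under low regularity of $h$, which under the smallness hypothesis realizes $G(sh)$ as a paradifferential operator of order one with principal symbol $|\xi|$ plus a smoother remainder, giving a bounded map $C_*^\gamma \to C_*^{\gamma-1}$. The condition $2\gamma \notin \Z$ avoids the integer Besov endpoints where product rules and symbolic calculus degenerate. Assembling these pieces yields
\[
N(s) \lesssim \||\nabla|^{1/2}\psi\|_{C_*^{\gamma-1/2}} + C\|h\|_{C_*^\gamma}\int_0^s N(\sigma)\, d\sigma,
\]
and Gr\"onwall closes the bound thanks to the smallness of $\|h\|_{C_*^\gamma}$.

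The main obstacle is the Besov paralinearization of $G(sh)$ under only $C_*^\gamma$-regularity of the interface; this is precisely the content of the analysis in \cite{AD}, so I would quote their paradifferential machinery rather than reprove it. Outside that black box, the most delicate point is the low-frequency inversion of $1 + (sh')^2$, which is exactly where the $\|h'\|_{C_*^{-1}}^{1/2}\|h'\|_{H^{-1}}^{1/2}$ term in the hypothesis plays its role, and the uniform-in-$s$ tracking of constants along the homotopy.
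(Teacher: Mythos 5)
The paper does not give a proof of this lemma at all: it is stated and labeled as a direct quotation of Lemma 2.0.5 in \cite{AD}, and the later proofs (Lemmas \ref{Gh-Hs}, \ref{Gh-Cr2}, \ref{Gh-Hs2}) simply invoke it. So your sketch is not being compared against an argument in this paper but against the cited one, and there it diverges from the actual strategy, which is elliptic estimates in the flattened fluid domain plus paradifferential calculus, not a Gr\"onwall loop in $s$.

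More importantly, your Gr\"onwall scheme as written does not close because of a derivative loss in the $\partial_s G(sh)\psi$ formula. The quantity $N(s)$ controls $B(sh)\psi$ and $V(sh)\psi$ only in $C_*^{\gamma-1}$, so $hV \in C_*^{\gamma-1}$ and $(hV)'$ lands in $C_*^{\gamma-2}$, one order below the target $C_*^{\gamma-1}$; the problematic paraproduct piece is $T_h V'$ (the pieces $T_{V'}h$, $T_{h'}V$, $T_V h'$ and the resonant part are all fine). The same thing happens with $G(sh)[hB]$: feeding $hB$ into the putative bound $\|G(sh)\phi\|_{C_*^{\gamma-1}}\lesssim\||\nabla|^{1/2}\phi\|_{C_*^{\gamma-1/2}}$ requires $|\nabla|^{1/2}(T_h B)\in C_*^{\gamma-1/2}$, but $T_h B$ is only $C_*^{\gamma-1}$, giving $C_*^{\gamma-3/2}$, again off by one. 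In Alazard--Delort these commutator losses are not avoided by Gr\"onwall; they are absorbed by the paralinearization of $G(h)$ and the good unknown $w = \psi - T_B h$ (Definition \ref{Tfg-def}), which is the technical heart of the result. The moment you ``quote their paradifferential machinery'' as a black box you already have the operator bound $G(sh):$ (half-derivative-normalized) $C_*^\gamma\to C_*^{\gamma-1}$, which is the lemma itself, making the Gr\"onwall scaffolding redundant; without it, the integral representation does not close. There is also a small slip: $B(0)\psi = |\nabla|\psi$, not $\psi'$ (only $V(0)\psi = \psi'$).
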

\begin{rem}\label{Gh-CrRem}
As everything is linear in $\psi$ and $L^2 \cap C_*^{\gamma-1/2}$ is dense in $C_*^{\gamma-1/2}$, $\psi$ only needs to lie in the space where the right-hand side makes sense.
\end{rem}

\begin{lem}\label{Gh-Hs}
Let $s > 7/2$. Then for all $(h, |\nabla|^{1/2}\psi) \in H^s \times H^{s-1/2}$ with $\|h\|_{H^s} \le c_s$ we have $\|G(h)\psi\|_{H^{s-1}} + \|B\|_{H^{s-1}} + \|V\|_{H^{s-1}} \lesssim_s \||\nabla|^{1/2}\psi\|_{H^{s-1/2}}$.
\end{lem}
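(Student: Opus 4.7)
The strategy is to combine the Besov bound of Lemma \ref{Gh-Cr} with a bootstrap over the deformation parameter in the integral representation (\ref{Gh-int}). First, the one-dimensional Sobolev embeddings $H^s \hookrightarrow C_*^{s-1/2}$ and $H^{s-1/2} \hookrightarrow C_*^{s-1}$, together with the smallness hypothesis $\|h\|_{H^s} \le c_s$, verify the hypotheses of Lemma \ref{Gh-Cr} uniformly for $\tau h$ with $\tau \in [0,1]$ and $\gamma := s - 1/2 > 3$ (perturbing $s$ infinitesimally if $2\gamma \in \Z$). This yields
\[
\|G(\tau h)\psi\|_{C_*^{s-3/2}} + \|B(\tau h)\psi\|_{C_*^{s-3/2}} + \|V(\tau h)\psi\|_{C_*^{s-3/2}} \lesssim_s \||\nabla|^{1/2}\psi\|_{H^{s-1/2}}, \qquad \tau \in [0,1].
\]

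Now let $N(\tau) := \|G(\tau h)\psi\|_{H^{s-1}} + \|B(\tau h)\psi\|_{H^{s-1}} + \|V(\tau h)\psi\|_{H^{s-1}}$. The plan is to derive a closed Gronwall-type inequality for $N$. Integrating the shape-derivative identity in (\ref{Gh-int}) gives
\[
G(\tau h)\psi = |\nabla|\psi - \int_0^\tau \big(G(\sigma h)[hB(\sigma h)\psi] + (hV(\sigma h)\psi)'\big)\, d\sigma,
\]
and the algebraic relations $B(\tau h)\psi = (G(\tau h)\psi + \tau h'\psi')/(1 + \tau^2 h'^2)$, $V(\tau h)\psi = \psi' - \tau h' B(\tau h)\psi$ recover $B$ and $V$ from $G$, $h$, and $\psi'$. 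Using the tame product estimate $\|fg\|_{H^{s-1}} \lesssim \|f\|_{H^{s-1}}\|g\|_{L^\infty} + \|f\|_{L^\infty}\|g\|_{H^{s-1}}$ (valid because $s - 1 > 1/2$), together with the standard inversion of $1 + \tau^2 h'^2$ for $\|h\|_{H^s}$ small, and bounding the rougher factor in each product by the Besov estimate of the first paragraph while absorbing the smoother factor into $N$, I expect to arrive at
\[
N(\tau) \lesssim_s \||\nabla|^{1/2}\psi\|_{H^{s-1/2}} + \|h\|_{H^s}\Big(N(\tau) + \int_0^\tau N(\sigma)\, d\sigma\Big).
\]
For $c_s$ small enough the $\|h\|_{H^s}N(\tau)$ term is absorbed to the left, and Gronwall on $[0,1]$ yields $N(1) \lesssim_s \||\nabla|^{1/2}\psi\|_{H^{s-1/2}}$, which is the claim.

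The main technical obstacle is that the individual pieces of $\partial_\sigma G(\sigma h)\psi$ are only naively bounded in $H^{s-2}$: specifically, $(hV(\sigma h)\psi)'$ involves $hV'$ with $V' \in H^{s-2}$, and similarly $G(\sigma h)[hB(\sigma h)\psi]$ nominally loses a derivative. The saving comes from an algebraic cancellation between the two terms: at leading order in $\sigma$ their sum reduces to $|\nabla|(h|\nabla|\psi) + (h\psi')'$, whose Fourier symbol $|\xi||\eta| - \xi\eta$ vanishes on the half-plane $\xi\eta > 0$ and thereby recovers one derivative of regularity by restricting to transversal frequencies. Iterating this observation through the $\sigma$-dependence via a Bony paraproduct decomposition closes the estimate. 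A more robust alternative would be to bypass (\ref{Gh-int}) and invoke the Alazard--M\'etivier paralinearization $G(h)\psi = T_\lambda \psi + R$, where $\lambda$ is a first-order paradifferential symbol and $R$ is a genuinely smoother remainder; estimates for $B$ and $V$ then follow from the same algebraic relations and the Sobolev product rule.
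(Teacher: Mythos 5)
The paper's proof of this lemma is a two-line citation: after reducing by density (Remark~\ref{Gh-CrRem}) to Schwartz $\psi$ and checking the smallness hypothesis (Remark~\ref{Gh-HsRem}), it quotes the tame Sobolev estimate of Theorem~2.1.1 in \cite{AD},
\[
\|(G(h)\psi, B, V)\|_{H^{s-1}} \lesssim_{\|h\|_{C_*^\gamma}} \||\nabla|^{1/2}\psi\|_{C_*^{\gamma-1/2}}\|h\|_{H^s} + \||\nabla|^{1/2}\psi\|_{H^{s-1/2}},
\]
and absorbs the first summand using $\|h\|_{H^s} \le c_s$. Your proposal instead attempts to rederive the estimate from the shape-derivative identity (\ref{Gh-int}) via a Gronwall bootstrap over the deformation parameter $\tau$. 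The preparatory Besov step (Sobolev embedding $H^a \hookrightarrow C_*^{a-1/2}$, uniformity in $\tau\in[0,1]$, perturbing to $2\gamma\notin\Z$) is correct, but the core of the argument is a different route from the paper's.

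The route has a genuine gap, which you yourself partly flag. The Gronwall inequality requires $\|\partial_\sigma G(\sigma h)\psi\|_{H^{s-1}}$ to be controlled by $N(\sigma)$, yet each of the two pieces $G(\sigma h)[hB(\sigma h)\psi]$ and $(hV(\sigma h)\psi)'$ individually lands only in $H^{s-2}$. You identify the saving cancellation, but you verify it only at $\sigma=0$, where the sum is $-|\nabla|(h|\nabla|\psi)-(h\psi')'$ with symbol $-(|\xi||\xi_2|-\xi\xi_2)$ vanishing on $\xi\xi_2>0$. For $\sigma>0$ the operators $G(\sigma h)$, $B(\sigma h)$, $V(\sigma h)$ are genuinely nonlinear in $h$, the explicit symbol is gone, and the analogous cancellation is exactly the content of the Alazard--M\'etivier paralinearization $G(h)\psi = T_\lambda\psi + R$ with a smoothing remainder $R$ --- i.e., the core of Chapter~2 of \cite{AD}. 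The sentence ``iterating this observation through the $\sigma$-dependence via a Bony paraproduct decomposition closes the estimate'' asserts, but does not supply, that machinery. Your ``more robust alternative'' of invoking the paralinearization is the right idea, but at that point one is citing \cite{AD}, Theorem~2.1.1, which is exactly what the paper does, and the Gronwall loop over $\tau$ adds nothing. As written, the proposal is an incomplete sketch rather than a proof.
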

\begin{rem}\label{Gh-HsRem}
We will pick $\gamma \in (3, s - 1/2) \backslash \frac12 \Z$.
By the embedding $H^s \sub C_*^\gamma \sub C_*^0$, $\|h'\|_{C_*^{\gamma-1}} + \|h'\|_{C_*^{-1}}^{1/2}\|h'\|_{H^{-1}}^{1/2} \lesssim_s \|h\|_{H^s} \le c_s$ is also small.
\end{rem}
\begin{proof}
By Remark \ref{Gh-CrRem} we can assume that $\psi$ is Schwartz.
By Remark \ref{Gh-HsRem} we have the necessary smallness condition to apply Theorem 2.1.1 in \cite{AD} to get
\begin{equation}
\begin{aligned}
\|(G(h)\psi, B, V)\|_{H^{s-1}}
&\lesssim_{\|h\|_{C_*^\gamma}} \||\nabla|^{1/2}\psi\|_{C_*^{\gamma-1/2}}\|h\|_{H^s} + \||\nabla|^{1/2}\psi\|_{H^{s-1/2}}\\
&\lesssim \||\nabla|^{1/2}\psi\|_{H^{s-1/2}}
\end{aligned}
\end{equation}
because $\|h\|_{H^s}$ is small and $H^{s-1/2} \sub C_*^{\gamma-1/2}$.
\end{proof}

\begin{lem}\label{Gh-Cr2}
Let $\gamma > 3$ be such that $2\gamma \notin \Z$. Then for all $(h, |\nabla|^{1/2}\psi) \in C_*^{\gamma+1} \times C_*^{\gamma+1/2}$ such that $\|h'\|_{C_*^\gamma} + \|h'\|_{C_*^{-1}}^{1/2}\|h'\|_{H^{-1}}^{1/2} \le c_r$, we have $\|G(h)\psi - |\nabla|\psi\|_{C_*^{\gamma-1}} + \|B - |\nabla|\psi\|_{C_*^{\gamma-1}} + \|V - \psi'\|_{C_*^{\gamma-1}} \lesssim_r \|h\|_{C_*^\gamma}\||\nabla|^{1/2}\psi\|_{C_*^{\gamma+1/2}}$.
\end{lem}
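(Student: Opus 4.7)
The identity $G(h)\psi - |\nabla|\psi = \int_0^1 \partial_sG(sh)\psi \, ds$ from (\ref{Gh-int}) reduces the estimate of the left-hand side to a uniform (in $s \in [0,1]$) estimate of the manifestly bilinear quantity
$$\partial_sG(sh)\psi = -G(sh)[hB(sh)\psi] - (hV(sh)\psi)'.$$
Since scaling $h$ down by $s$ only makes the smallness hypothesis easier, $sh$ lies in the range of validity of Lemma \ref{Gh-Cr} throughout, and I can differentiate under the integral without issue. After this reduction, the $B - |\nabla|\psi$ and $V - \psi'$ bounds will fall out from algebraic manipulations on the definitions of $B$ and $V$.

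The key observation is that the present hypotheses assume \emph{one more derivative} on $h$ and $|\nabla|^{1/2}\psi$ than Lemma \ref{Gh-Cr} requires; in particular the smallness condition $\|h'\|_{C_*^\gamma} + \|h'\|_{C_*^{-1}}^{1/2}\|h'\|_{H^{-1}}^{1/2} \le c_r$ is exactly what is needed to apply Lemma \ref{Gh-Cr} with $\gamma$ replaced by $\gamma+1$. Doing so yields
$$\|B(sh)\psi\|_{C_*^\gamma} + \|V(sh)\psi\|_{C_*^\gamma} \lesssim_r \||\nabla|^{1/2}\psi\|_{C_*^{\gamma+1/2}}.$$
Combining with the Besov algebra property for $C_*^\gamma$ (available since $\gamma > 3 > 0$) and the embedding $C_*^\gamma \hookrightarrow L^\infty$ gives
$$\|hB(sh)\psi\|_{C_*^\gamma} + \|hV(sh)\psi\|_{C_*^\gamma} \lesssim_r \|h\|_{C_*^\gamma}\||\nabla|^{1/2}\psi\|_{C_*^{\gamma+1/2}}.$$
A second application of Lemma \ref{Gh-Cr}, this time at the original level $\gamma$ and with $hB(sh)\psi$ playing the role of $\psi$, bounds $\|G(sh)[hB(sh)\psi]\|_{C_*^{\gamma-1}}$ by a constant times $\||\nabla|^{1/2}[hB(sh)\psi]\|_{C_*^{\gamma-1/2}} \lesssim \|hB(sh)\psi\|_{C_*^\gamma}$; the second term $(hV(sh)\psi)'$ is handled by the trivial bound $\|u'\|_{C_*^{\gamma-1}} \lesssim \|u\|_{C_*^\gamma}$. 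Integrating in $s$ closes the bound on $G(h)\psi - |\nabla|\psi$.

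For $B$ and $V$, I would argue algebraically. From the formula for $B$ in (\ref{Gh-int}),
$$B - |\nabla|\psi = \frac{(G(h)\psi - |\nabla|\psi) + h'(\psi' - h'|\nabla|\psi)}{1 + h'^2},$$
so the already-proven estimate on $G(h)\psi - |\nabla|\psi$, together with the Besov product estimate $\|h'\psi'\|_{C_*^{\gamma-1}} + \|h'^2|\nabla|\psi\|_{C_*^{\gamma-1}} \lesssim \|h\|_{C_*^\gamma}\||\nabla|^{1/2}\psi\|_{C_*^{\gamma+1/2}}$ and the fact that $1 + h'^2$ is bounded away from zero and has small $C_*^{\gamma-1}$ norm (so division is bounded in the algebra $C_*^{\gamma-1}$), yields the desired bound. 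For $V - \psi' = -h'B$, the product rule in $C_*^{\gamma-1}$ combined with the $B$ bound from Lemma \ref{Gh-Cr} at level $\gamma$ gives the claim immediately.

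The main obstacle is purely bookkeeping: arranging the product estimates and two different applications of Lemma \ref{Gh-Cr} (at levels $\gamma$ and $\gamma+1$) so that exactly one factor of $\|h\|_{C_*^\gamma}$ and one of $\||\nabla|^{1/2}\psi\|_{C_*^{\gamma+1/2}}$ come out, without losing the half-derivative of regularity that the lemma claims to save. There is no substantive analytical difficulty beyond what is already encoded in Lemma \ref{Gh-Cr}.
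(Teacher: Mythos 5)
Your proposal is correct and follows essentially the same route the paper has in mind: the paper's one-line proof (``This follows from the identities above and Lemma~\ref{Gh-Cr}'') refers exactly to expanding $G(h)\psi - |\nabla|\psi$ via the integral formula in (\ref{Gh-int}), bounding $B(sh)\psi$ and $V(sh)\psi$ by Lemma~\ref{Gh-Cr} applied at the raised regularity level $\gamma+1$ (note $2(\gamma+1)\notin\Z$ follows from $2\gamma\notin\Z$, and the hypothesis $\|h'\|_{C_*^\gamma}+\cdots\le c_r$ is precisely the smallness condition at that level), then bounding $G(sh)[\cdot]$ by a second application at level $\gamma$, using the Besov algebra structure of $C_*^\gamma$ for the products and extracting the factor of $\|h\|_{C_*^\gamma}$ from the explicit $h$ in the formula for $\partial_sG(sh)\psi$. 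The algebraic reductions you give for $B-|\nabla|\psi$ and $V-\psi'$ are also the intended ones; you correctly check that the division by $1+h'^2$ is controlled because $h'$ is small in the algebra $C_*^{\gamma-1}$. Your expansion fills in the steps the paper leaves implicit and is a faithful account of its proof.
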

\begin{proof}
This follows from the identities above and Lemma \ref{Gh-Cr}.
\end{proof}

\begin{lem}\label{Gh-Hs2}
Let $s, \mu, \gamma \in \R$ be such that $s - 1/2 > \gamma > 3$, $s \ge \mu \ge 3/2$ and $2\gamma \notin \Z$. Then for all $(h, |\nabla|^{1/2}\psi \in C_*^\gamma \times (C^{\gamma-1/2} \cap H^{\mu-3/2})$ such that $\|h\|_{H^s} \le c_s$, we have
\begin{equation}
\begin{aligned}
\|G(h)\psi - |\nabla|\psi\|_{H^{\mu-1}}
&\lesssim_{s,\mu,\gamma} \||\nabla|^{1/2}\psi\|_{C_*^{\gamma-1/2}}\|h\|_{H^s} + \|h\|_{C_*^\gamma}\||\nabla|^{1/2}\psi\|_{H^{\mu-3/2}},\\
\|B(h)\psi - |\nabla|\psi\|_{H^{\mu-1}}
&\lesssim_{s,\mu,\gamma} \||\nabla|^{1/2}\psi\|_{C_*^{\gamma-1/2}}\|h\|_{H^s} + \|h\|_{C_*^\gamma}\||\nabla|^{1/2}\psi\|_{H^{\mu-1/2}}.
\end{aligned}
\end{equation}
\end{lem}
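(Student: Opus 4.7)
The strategy mirrors the proof of Lemma \ref{Gh-Hs}, starting from the integral identities in \eqref{Gh-int}:
\begin{equation*}
G(h)\psi - |\nabla|\psi = -\int_0^1 \bigl[G(sh)(hB(sh)\psi) + (hV(sh)\psi)'\bigr]\,ds,
\end{equation*}
so it suffices to bound each integrand in $H^{\mu-1}$ uniformly in $s \in [0,1]$. Since $\|sh\|_{H^s} \le c_s$, Remark \ref{Gh-HsRem} guarantees the Besov smallness hypotheses of Lemmas \ref{Gh-Cr} and \ref{Gh-Cr2}, so the $L^\infty$-type bounds $\|B(sh)\psi\|_{C_*^{\gamma-1}} + \|V(sh)\psi\|_{C_*^{\gamma-1}} \lesssim \||\nabla|^{1/2}\psi\|_{C_*^{\gamma-1/2}}$ are available at no cost.

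The main ingredient is the tame bilinear estimate from Theorem 2.1.1 of \cite{AD}, already used in Lemma \ref{Gh-Hs}, in the form
\begin{equation*}
\|(G(\tilde h)\phi, B(\tilde h)\phi, V(\tilde h)\phi)\|_{H^{\nu-1}} \lesssim_{\|\tilde h\|_{C_*^\gamma}} \||\nabla|^{1/2}\phi\|_{H^{\nu-1/2}} + \|\tilde h\|_{H^\nu}\||\nabla|^{1/2}\phi\|_{C_*^{\gamma-1/2}},
\end{equation*}
valid for $\nu$ in the appropriate range. Combining this (with $\tilde h = sh$) with the standard tame Sobolev product estimate $\|fg\|_{H^\nu} \lesssim \|f\|_{L^\infty}\|g\|_{H^\nu} + \|f\|_{H^\nu}\|g\|_{L^\infty}$, I bound $\|(hV)'\|_{H^{\mu-1}} \le \|hV\|_{H^\mu}$ and $\|G(sh)(hB)\|_{H^{\mu-1}}$ after decomposing $V = \psi' + (V-\psi')$ and $B = |\nabla|\psi + (B-|\nabla|\psi)$. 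The remainder pieces containing $V-\psi'$ and $B-|\nabla|\psi$ are controlled by Lemma \ref{Gh-Cr2} and automatically carry an extra factor $\|h\|_{C_*^\gamma}$; the leading combination $G(sh)(h|\nabla|\psi) + (h\psi')'$ is collapsed using its symbol-level cancellation (in one dimension $|\nabla|^2 = -\partial_x^2$) into a Coifman--Meyer-type bilinear in $h$ and $\psi$, which is precisely what gains the extra derivative on $\psi$ and produces the index $\mu - 3/2$.

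The bound on $B - |\nabla|\psi$ then follows by combining the first bound with the algebraic identity
\begin{equation*}
B(h)\psi - |\nabla|\psi = \frac{(G(h)\psi - |\nabla|\psi) + h'\psi' - h'^2|\nabla|\psi}{1 + h'^2}
\end{equation*}
and the tame product estimate applied to the remaining $h'\psi'$ and $h'^2|\nabla|\psi$ terms; the bare factor $h'$ (a full derivative of $h$) accounts for the index shift from $\|\,\cdot\,\|_{H^{\mu-3/2}}$ to $\|\,\cdot\,\|_{H^{\mu-1/2}}$ on $|\nabla|^{1/2}\psi$ between the two displayed bounds.

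\textbf{The main obstacle} is the delicate tracking of regularity indices so that the top-order cancellation between $G(sh)(h|\nabla|\psi)$ and $(h\psi')'$ is made visible at the Sobolev scale: estimating each separately would force $\||\nabla|^{1/2}\psi\|_{H^{\mu+1/2}}$ on the right-hand side instead of the claimed $\||\nabla|^{1/2}\psi\|_{H^{\mu-3/2}}$. The analogous cancellation in the Besov scale is already encoded in Lemma \ref{Gh-Cr2}, and transferring it to Sobolev is the only non-formal step.
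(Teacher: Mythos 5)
Your route diverges from the paper's: the paper simply quotes estimate (2.5.1) of \cite{AD} to obtain the bound on $G(h)\psi-|\nabla|\psi$, and then uses the algebraic formula for $B$ in \eqref{Gh-int} plus the Sobolev multiplication theorem for the second bound (your treatment of $B$ matches this). You instead attempt to re-derive the $G(h)\psi$ estimate from the integral identity and Theorem 2.1.1 of \cite{AD}, which is a genuinely different and more self-contained approach --- but it has a real gap at the step you flag as the ``only non-formal'' one.

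The gap: you claim that $G(sh)(h|\nabla|\psi)+(h\psi')'$ collapses ``using its symbol-level cancellation.'' But $G(sh)$ is not a Fourier multiplier; it is the Dirichlet-to-Neumann operator for the domain $\{y<sh(x)\}$, so there is no symbol to cancel against $(h\psi')'$. The cancellation you are implicitly invoking (the one encoded in the first multiplier of \eqref{multipliers}) is between $|\nabla|(h|\nabla|\psi)$ and $(h\psi')'$. To use it you must further split $G(sh)=|\nabla|+\bigl(G(sh)-|\nabla|\bigr)$, and then you are left with the correction $\bigl(G(sh)-|\nabla|\bigr)(h|\nabla|\psi)$. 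This correction cannot be dismissed as higher order in a way that closes the estimate: $h|\nabla|\psi$ sits one derivative below $\psi$ (at $H^{\mu-2}$ when $|\nabla|^{1/2}\psi\in H^{\mu-3/2}$), and the operator $G(sh)-|\nabla|$ is, at the level of the bound you are trying to prove, a zeroth-order operator in the $\psi$-slot --- so applying any available tame estimate to $\bigl(G(sh)-|\nabla|\bigr)(h|\nabla|\psi)$ returns $H^{\mu-2}$ regularity, one derivative short of the claimed $H^{\mu-1}$. Making this work requires exactly the paradifferential expansion of $G(h)$ carried out in Chapter 2 of \cite{AD} (i.e., the content of (2.5.1)), not a one-shot application of Theorem 2.1.1 plus a Coifman--Meyer multiplier bound. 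So either cite (2.5.1) directly as the paper does, or supply the paradifferential argument; the cancellation you describe does not by itself close the estimate.
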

\begin{proof}
By Remark \ref{Gh-CrRem} we can assume that $\psi$ is Schwartz.
By Remark \ref{Gh-HsRem} we have the necessary smallness condition to apply (2.5.1) in \cite{AD} to get the bound for $G(h)\psi$. To get the other bound, we also need the expression of $B = B(h)\psi$ in (\ref{Gh-int}), the Sobolev multiplication theorem and the smallness of $\|h\|_{H^s}$.
\end{proof}

\section{Energy estimates}\label{EneSec}
Here we collect the assumptions on which Chapters 1--3 of \cite{AD} are based.

Let $T > 0$. Let $s$, $\rho$ such that $s > \rho + 1 > 14$ and that $2\rho \notin \Z$.

\begin{df}[Definition A.1.2 of \cite{AD}]\label{Tfg-def}
Define $w = \psi - T_Bh$, where
\begin{equation}
\widehat{T_fg}(\xi) = \int_{\xi_1+\xi_2=\xi} \varphi(\xi_1, \xi_2)\hat f(\xi_1)\hat g(\xi_2)d\xi
\end{equation}
is the paraproduct, where $\varphi$ is smooth and satisfies
\begin{equation}\label{cut-off}
\varphi(\xi_1, \xi_2) =
\begin{cases}
1, & |\xi_1| \ll |\xi_2|\text{ and }|\xi_2| \gg 1,\\
0, & |\xi_1| \gg |\xi_2|\text{ or }|\xi_2| \ll 1.
\end{cases}
\end{equation}
\end{df}

\begin{ass}[Assumption 3.1.1 (i) in \cite{AD}]\label{Ass1}
$(h, |\nabla|^{1/2}\psi) \in C([0, T], H^s \times H^{s-1/2}$ and $|\nabla|^{1/2}w \in C([0, T], H^s)$.
\end{ass}

\begin{ass}[Assumption 3.1.1 (ii) in \cite{AD}]\label{Ass2}
\begin{equation}
\sup_{t\in[0,T]} (\|h'\|_{C_*^{\rho-1}} + \|h'\|_{C_*^{-1}}^{1/2}\|h'\|_{H^{-1}}^{1/2}) \le c_{s,\rho}\text{ is small enough}
\end{equation}
\end{ass}

\begin{rem}\label{RemAss2}
By the remark after Assumption 3.1.1 in \cite{AD}, Assumption \ref{Ass2} is guaranteed if $\sup_{t\in[0,T]} \|h\|_{C_*^\rho}$, $\|h(0)\|_{L^2}$ and $\||\nabla|^{1/2}\psi(0)\|_{L^2}$ are small enough.
\end{rem}

\begin{ass}[Assumption 3.1.5 in \cite{AD}]\label{Ass3}
\begin{equation}
\sup_{t\in[0,T]} (\|h(t)\|_{C_*^\rho} + \||\nabla|^{1/2}\psi(t)\|_{C_*^\rho}) \le c_{s,\rho}\text{ is small enough}
\end{equation}
\end{ass}

Under these assumptions we aim to control the growth of the energy
\begin{equation}\label{Es-def}
E_s(t) = \|h(t)\|_{H^s} + \||\nabla|^{1/2}w(t)\|_{H^s}.
\end{equation}
\begin{prop}\label{EneEst}
Let $T > 0$. Let $s > \gamma + 1/2 > 14$ be such that that $2\gamma \notin \Z$. Assume that $\ep \le c_{s,\gamma}$ is small enough and that

(i) Assumption 1 is satisfied, that

(ii) $E_s(0) \le \ep$, and that

(iii) $E_s(t) \le 10\ep$ for all $t \in [0, T]$.

Then there is a constant $C = C_{s,\gamma}$ such that, with $\rho = \gamma - 1/2$,
\begin{equation}
E_s(t)^2 \le 81E_s(0)^2 + C\int_0^t (\|h(\tau)\|_{C_*^\rho} + \||\nabla|^{1/2}\psi(\tau)\|_{C_*^\rho})^2E_s(\tau)^2d\tau.
\end{equation}
\end{prop}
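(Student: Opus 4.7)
The plan is to reduce to the main energy output of the paralinearization and symmetrization scheme developed in Chapters 1--3 of \cite{AD}, which is the computation we are entitled to quote. The role of the proposition is to repackage those estimates so that the smallness hypotheses are stated directly in terms of $E_s$ on $[0,T]$; in this form they are convenient for the bootstrap that will drive Theorems \ref{Thm1} and \ref{Thm2}.

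First, I would verify that Assumptions \ref{Ass2} and \ref{Ass3} hold on $[0,T]$, taking $\rho = \gamma - 1/2$. Since $s > \gamma + 1/2 = \rho + 1$, Sobolev embedding $H^s \hookrightarrow C_*^\rho$ gives $\|h(t)\|_{C_*^\rho} \lesssim \|h(t)\|_{H^s} \le E_s(t) \le 10\ep$ directly from the bootstrap. For $\||\nabla|^{1/2}\psi\|_{C_*^\rho}$, I would write $\psi = w + T_B h$, bound $|\nabla|^{1/2} w$ via Sobolev embedding from $H^s$, and handle $|\nabla|^{1/2} T_B h$ by a standard paraproduct estimate combined with the control of $B$ provided by Lemma \ref{Gh-Hs}. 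Once Assumption \ref{Ass3} is secured with a sufficiently small constant, Remark \ref{RemAss2} yields Assumption \ref{Ass2}, since $\|h(0)\|_{L^2}$ and $\||\nabla|^{1/2}\psi(0)\|_{L^2}$ are dominated by $E_s(0) \le \ep$.

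Next, I would invoke the symmetrized energy constructed in Chapter 3 of \cite{AD}, call it $\mathcal{E}_s(t)^2$. Under our smallness assumptions it is equivalent to $E_s(t)^2$ with a ratio as close to $1$ as we wish upon choosing $\ep$ small, and it obeys the differential inequality
\[
\tfrac{d}{dt} \mathcal{E}_s(t)^2 \le C_{s,\gamma}\bigl(\|h(t)\|_{C_*^\rho} + \||\nabla|^{1/2}\psi(t)\|_{C_*^\rho}\bigr)^2 \mathcal{E}_s(t)^2.
\]
Integrating from $0$ to $t$ and converting back via the equivalence produces the stated estimate; the generous factor $81 = 9^2$ comfortably absorbs the product of the equivalence ratios at times $0$ and $t$, once $\ep$ is taken sufficiently small.

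The main obstacle is purely bookkeeping: checking that the AD framework applies uniformly throughout $[0,T]$ under the bootstrap, and that the equivalence constants between $E_s$ and $\mathcal{E}_s$ fit inside the factor $81$. The core dynamical computation---the paralinearization of $G(h)\psi$, the symmetrization that diagonalizes the principal part of the system at the $H^s$ level, and the resulting cancellation that removes the quadratic growth---is already contained in \cite{AD} and need not be redone here.
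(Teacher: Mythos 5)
Your proposal follows the same route as the paper: verify Assumptions \ref{Ass2} and \ref{Ass3} under the bootstrap hypotheses (via Remark \ref{Gh-HsRem}, the paraproduct bound for $T_Bh$, and Lemma \ref{Gh-Cr}), then quote the symmetrized energy and quartic energy estimate from Chapter 3 of \cite{AD}. One small inaccuracy: you say the symmetrized energy $\mathcal{E}_s$ is equivalent to $E_s$ ``with a ratio as close to $1$ as we wish''; in fact the paper quotes the fixed equivalence $E_s/3 \le \|\Phi\|_{H^s} \le 3E_s$ from (3.7.2)--(3.7.3) of \cite{AD}, and the constant $81$ arises precisely as $9 \cdot 9$ from squaring and applying this equivalence at times $0$ and $t$ — not from pushing a ratio toward $1$. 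Your arithmetic ``$81 = 9^2$ absorbs the product of the equivalence ratios'' is the correct account; the ``close to $1$'' remark should simply be dropped.
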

\begin{proof}

We first check the assumptions.
Assumption \ref{Ass1} is already assumed in (i). Since $s > \gamma + 1/2$,
by Remark \ref{Gh-HsRem}, Assumption \ref{Ass2} is also satisfied,
even with $\rho$ replaced by $\gamma$.

Now for Assumption \ref{Ass3}. By Assumption \ref{Ass1}, $|\nabla|^{1/2}\psi \in H^{s-1/2} \sub C_*^{\gamma-1/2}$. Then by Lemma \ref{Gh-Cr},
\begin{align}
\||\nabla|^{1/2}(w - \psi)\|_{H^{s-1/2}} &\le \|T_Bh\|_{H^s}
\lesssim_s \|B\|_{L^\infty}\|h\|_{H^s} \tag{by (A.1.5) of \cite{AD}}\\
&\lesssim \||\nabla|^{1/2}\psi\|_{C_*^3}\|h\|_{H^s}
\lesssim \||\nabla|^{1/2}\psi\|_{H^{s-1/2}}\|h\|_{H^s}.
\label{w-phi-Hs}
\end{align}
Then
\begin{equation}\label{w-Hs=psi-Hs}
\||\nabla|^{1/2}w\|_{H^{s-1/2}} = (1 + O_s(\|h\|_{H^s}))\||\nabla|^{1/2}\psi\|_{H^{s-1/2}}.
\end{equation}
Now for all $t \in [0, T]$, since $\|h(t)\|_{H^s} \le 10\ep \le 10c_{s,\gamma}$ is small enough,
\begin{equation}\label{psi-Hs}
\||\nabla|^{1/2}\psi(t)\|_{C_*^\rho}
\lesssim_{s,\gamma} \||\nabla|^{1/2}\psi(t)\|_{H^{s-1/2}}
\lesssim \||\nabla|^{1/2}w(t)\|_{H^{s-1/2}} \le 10\ep \le 10c_{s,\gamma}
\end{equation}
is also small enough, so Assumption \ref{Ass3} is satisfied.

Then the arguments up to Chapter 3 of \cite{AD} applies. In more detail,
there is a change of variable $(T_ah, |\nabla|^{1/2}w) \mapsto \Phi$ satisfying $E_s/3 \le \|\Phi\|_{H^s} \le 3E_s$ by (3.7.2) and (3.7.3) in \cite{AD} and the quartic energy estimate
\begin{equation}
\|\Phi(t)\|_{H^s}^2 \le \|\Phi(0)\|_{H^s}^2 + C\int_0^t (\|h(\tau)\|_{C_*^\rho}^2 + \||\nabla|^{1/2}\psi(\tau)\|_{C_*^\rho}^2)\|\Phi(\tau)\|_{H^s}^2d\tau
\end{equation}
by (3.7.7) in \cite{AD}, from which the result now follows.
\end{proof}

\section{Strichartz estimates}\label{StrSec}
Let $u = h + i|\nabla|^{1/2}\psi$, whose evolution equation is $u_t + i\Lambda u = N$, where the dispersion relation $\Lambda = |\nabla|^{1/2}$ and (see (6.1) and (4.43) in \cite{Z})
\begin{equation}\label{N-def}
\begin{aligned}
N&=(G(h)-|\nabla|)\psi+\frac i2|\nabla|^{1/2}((1+h'^2)B^2-\psi'^2)=N_2+N_3,\\
N_2&=-|\nabla|(h|\nabla|\psi)-(h\psi')'+\frac i2|\nabla|^{1/2}((|\nabla|\psi)^2-\psi'^2),\\
N_3&=B_3+h'^2B+\frac i2|\nabla|^{1/2}(B^2-(|\nabla|\psi)^2+h'^2B^2),\\
B_3&=B - |\nabla|\psi + |\nabla|(h|\nabla|\psi) + h\psi''.
\end{aligned}
\end{equation}
$N_2$ is a sum of the terms $N_{\mu\nu}=N_{\mu\nu}[u_\mu,u_\nu]$, where $\mu,\nu=\pm$, $u_+ = u$, $u_- = \bar u$,
\begin{equation}\label{Nmunu-def}
\hat N_{\mu\nu}(\xi) = \int_{\xi_1+\xi_2=\xi} m_{\mu\nu}(\xi_1,\xi_2)\hat u_\mu(\xi_1)\hat u_\nu(\xi_2)d\xi_1
\end{equation}
and $m_{\mu\nu}(\xi_1,\xi_2)$ are linear combinations of multipliers in the set
\begin{equation}\label{multipliers}
\left\{\frac{|\xi_1+\xi_2||\xi_2|-(\xi_1+\xi_2)\cdot\xi_2}{\sqrt{|\xi_2|}},
\sqrt{|\xi_1+\xi_2|}\frac{|\xi_1||\xi_2|+\xi_1\cdot\xi_2}{\sqrt{|\xi_1||\xi_2|}}\right\}.
\end{equation}

By Duhamel's formula,
\begin{equation}\label{Duhamel}
u(t)=e^{-it\Lambda}u(0)+u_2(t)+u_3(t),\
u_j(t)=\int_0^t e^{-i(t-\tau)\Lambda}N_j(\tau)d\tau,\ j\in\{2,3\}.
\end{equation}
Using integration by parts in time we get (see (6.4) to (6.6) in \cite{Z})
\begin{equation}\label{IBP}
\begin{aligned}
u_2(t) &= \sum_{\mu,\nu=\pm} \left( Q_{\mu\nu}(t) - e^{-it\Lambda}Q_{\mu\nu}(0) - \int_0^t e^{-i(t-\tau)\Lambda}C_{\mu\nu}(\tau)d\tau \right),\\
\hat Q_{\mu\nu}(\xi, t) &= C\int_{\xi_1+\xi_2=\xi} \frac{m_{\mu\nu}(\xi_1, \xi_2)}{i\Phi_{\mu\nu}(\xi_1, \xi_2)}\hat u_\mu(\xi_1, t)\hat u_\nu(\xi_2, t)d\xi_1,\\
\hat C_{\mu\nu}(\xi, t) &= C\int_{\xi_1+\xi_2=\xi} \frac{m_{\mu\nu}(\xi_1, \xi_2)}{i\Phi_{\mu\nu}(\xi_1, \xi_2)}(\hat u_\mu(\xi_1, t)\hat N_\nu(\xi_2, t) + \hat N_\mu(\xi_1, t)u_\nu(\xi_2, t))d\xi_1
\end{aligned}
\end{equation}
where $N_+ = N$ and $N_- = \bar N$, and $\Phi_{\mu\nu}$, as defined in (\ref{Phi-def}), does not vanish unless $\xi_1$ or $\xi_2$ or $\xi_1 + \xi_2 = 0$, in which case $m_{\mu\nu}(\xi_1, \xi_2) = 0$.

\subsection{Linear estimates}
\begin{lem}[Strichartz estimates]\label{Strichartz}
For $s \in \R$, $\|e^{-it\Lambda}u\|_{L^4_tC_*^s} \lesssim_s \|u\|_{H^{s+3/8}}$.
\end{lem}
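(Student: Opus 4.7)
The plan is a Littlewood--Paley decomposition combined with a standard stationary-phase/$TT^*$ argument tailored to the one-dimensional dispersion relation $\Lambda = |\nabla|^{1/2}$. Writing $u = \sum_j P_j u$ with $P_j$ projecting onto $|\xi| \sim 2^j$, and recalling the Besov characterization $\|f\|_{C_*^s} \sim \sup_j 2^{js}\|P_j f\|_{L^\infty_x}$ (plus a low-frequency piece, handled by Bernstein), it suffices to establish the frequency-localized bound
$$\|e^{-it\Lambda} P_j f\|_{L^4_t L^\infty_x} \lesssim 2^{3j/8}\|P_j f\|_{L^2_x}.$$
Given this, using $\sup_j a_j \le (\sum_j a_j^2)^{1/2}$ and then Minkowski's inequality to exchange $\sum_j$ with $\|\cdot\|_{L^2_t}$ (valid because squaring turns $L^4_t$ into $L^2_t$), one computes
$$\|e^{-it\Lambda}u\|_{L^4_t C_*^s}^2 \le \Big\|\sum_j 4^{js}\|P_j e^{-it\Lambda}u\|_{L^\infty_x}^2\Big\|_{L^2_t} \le \sum_j 4^{js}\|P_j e^{-it\Lambda}u\|_{L^4_t L^\infty_x}^2 \lesssim \|u\|_{H^{s+3/8}}^2.$$

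For the dyadic bound, I would analyze the kernel $K_j(t,x) = \int e^{i(x\xi - t|\xi|^{1/2})}\chi_j(\xi)d\xi$. The phase $\phi(\xi) = x\xi - t|\xi|^{1/2}$ satisfies $\phi''(\xi) \sim t\,|\xi|^{-3/2} \sim t\cdot 2^{-3j/2}$ on $\supp\chi_j$, so stationary phase (with a single nondegenerate critical point $\xi_0$ of size $|\xi_0| \sim 2^j$ together with integration by parts in the non-stationary region) yields the dispersive bound
$$\|e^{-it\Lambda}P_j\|_{L^1_x \to L^\infty_x} \lesssim |t|^{-1/2}\,2^{3j/4}, \qquad t \neq 0.$$
Combined with the trivial $L^2_x \to L^2_x$ unitarity, a $TT^*$ argument reduces the target Strichartz bound to mapping $TT^*: L^{4/3}_t L^1_x \to L^4_t L^\infty_x$; this follows from the dispersive bound and Hardy--Littlewood--Sobolev applied to the fractional kernel $|t-s|^{-1/2}$ (with dual exponents $(4/3, 4)$ satisfying $1/(4/3) - 1/4 = 1/2$), producing exactly the loss of $2^{3j/8}$ claimed.

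The only real input is the stationary-phase bound on each dyadic shell, but since $\phi''$ is nonvanishing on $\supp\chi_j$ for $j \ge 0$ (staying away from $\xi=0$), no degeneracies arise and the computation is standard. I therefore anticipate no genuine obstacles; the argument is entirely parallel to the standard Strichartz theory for half-wave-type operators, only with the exponent $3/8$ arising as $\tfrac{1}{2}\cdot\tfrac{3}{4}$ from the precise dispersive loss dictated by $\Lambda = |\nabla|^{1/2}$.
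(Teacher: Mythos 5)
Your proof is correct and follows essentially the same route as the paper: both reduce to the dyadic bound $\|e^{-it\Lambda}P_j\|_{L^2\to L^4_tL^\infty_x}\lesssim 2^{3j/8}$ and obtain it from the $|t|^{-1/2}$ dispersive estimate combined with Hardy--Littlewood--Sobolev via $TT^*$. The only cosmetic difference is that the paper exploits the scaling symmetry $(x,t)\mapsto(2^kx,2^{k/2}t)$ of $\Lambda=|\nabla|^{1/2}$ to reduce to $k=0$ before invoking the standard dispersive estimate, whereas you carry out the stationary-phase computation on each shell directly to obtain the factor $2^{3j/4}$ in the $L^1\to L^\infty$ bound; these produce the identical constant.
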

\begin{proof}
It suffices to show that $\|P_ke^{-it\Lambda}u\|_{L^4_tL^\infty_x} \lesssim 2^{3k/8}\|u\|_{L^2}$ for $k \in \Z$, where $P_k$ denotes the Littlewood--Paley decomposition. Since $\Lambda$ is homogeneous of degree $1/2$, the scaling $(x, t) \mapsto (2^kx, 2^{k/2}t)$ is a symmetry, so we can assume $k = 0$. Then the result from the standard $t^{-1/2}$ dispersion estimates and the Hardy--Littlewood--Sobolev inequality, see Theorem 2.3 in \cite{Tao} for details.
\end{proof}

\subsection{Bounding the quadratic boundary term $Q_{\mu\nu}$}
To bound the bilinear term $Q_{\mu\nu}$, we need a property of its multiplier.

\begin{df}[Definition C.1--C.2 in \cite{GMS}] A multiplier $m(\xi_1, \xi_2)$ is of class $\mathcal B_s$ if:

\begin{itemize}
\item $m$ is homogeneous of degree $s$,

\item $m$ is smooth outside $\{\xi_1\xi_2(\xi_1 + \xi_2) = 0\}$,

\item Near $\xi_j = 0$ ($j = 1$, 2), $m$ is a smooth function of $|\xi_j|^{1/2}$, $\xi_j/|\xi_j|$ and $\xi_{3-j}$. Near $\xi_1 + \xi_2 = 0$, $m$ is a smooth function of $|\xi_1 + \xi_2|^{1/2}, (\xi_1 + \xi_2)/|\xi_1 + \xi_2|$ and $\xi_1$.
\end{itemize}
If moreover $m$ is supported on $\{|\xi_1| \gtrsim |\xi_2|\}$, we say that it is of class $\tilde B_s$.
\end{df}

\begin{lem}\label{Q-Hr}
Let $s > \gamma + 1 = \rho + 3/2 > 3/2$. Then $\|Q_{\mu\nu}\|_{H^\gamma} \lesssim_{s,\gamma} \|u\|_{C_*^\rho}\|u\|_{H^s}$.
\end{lem}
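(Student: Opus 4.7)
The plan is to view $\tilde m_{\mu\nu} := m_{\mu\nu}/(i\Phi_{\mu\nu})$ as a bilinear Fourier multiplier of class $\mathcal{B}_1$, and then apply a Coifman--Meyer-type bilinear estimate combined with Littlewood--Paley bookkeeping.

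First, I would verify that $\tilde m_{\mu\nu}\in\mathcal{B}_1$. The numerator, a linear combination of the symbols in (\ref{multipliers}), is homogeneous of degree $3/2$ and lies in $\mathcal{B}_{3/2}$; the denominator $\Phi_{\mu\nu}$ is homogeneous of degree $1/2$ and smooth and nonvanishing outside the degenerate locus $\{\xi_1\xi_2(\xi_1+\xi_2)=0\}$. On the locus itself, the key point, already noted after (\ref{IBP}), is that $m_{\mu\nu}$ vanishes whenever $\Phi_{\mu\nu}$ does, and a local expansion near each branch shows that the vanishing order of $m_{\mu\nu}$ exceeds that of $\Phi_{\mu\nu}$ by exactly the amount needed so that the quotient is of class $\mathcal{B}_1$ in the sense of the definition from \cite{GMS}. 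By symmetry of the bilinear form one may then split $\tilde m_{\mu\nu}=\tilde m^{lo}+\tilde m^{hi}$, with $\tilde m^{lo}$ supported in $\{|\xi_1|\gtrsim|\xi_2|\}$ (hence in $\tilde B_1$) and $\tilde m^{hi}$ its reflection, so that it suffices to estimate the $\tilde B_1$ piece.

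Next I would invoke the standard bilinear bound for $\tilde B_1$ symbols (the analogue of Proposition C.3 in \cite{GMS}): for $k_2\le k_1+O(1)$,
\begin{equation*}
\|T_{\tilde m^{lo}}[P_{k_1}f, P_{k_2}g]\|_{L^2}\lesssim 2^{k_1}\|P_{k_1}f\|_{L^2}\|P_{k_2}g\|_{L^\infty},
\end{equation*}
with output Fourier-localized in $\{|\xi|\lesssim 2^{k_1}\}$. Decomposing $Q_{\mu\nu}=Q_{HL}+Q_{HH}$ into high--low ($k_1\gg k_2$, output $\sim 2^{k_1}$) and high--high ($k_1\sim k_2$) interactions, the high--low contribution satisfies
\begin{equation*}
\|Q_{HL}\|_{H^\gamma}^2\lesssim\sum_{k_1}2^{2(\gamma+1)k_1}\|P_{k_1}u\|_{L^2}^2\,\|u\|_{L^\infty}^2\lesssim\|u\|_{H^{\gamma+1}}^2\|u\|_{C_*^\rho}^2,
\end{equation*}
using $\rho>0$ and $s\ge\gamma+1$. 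For the high--high piece, since the output is localized in $\{|\xi|\lesssim 2^{k_1}\}$ the cost of $|\nabla|^\gamma$ is $\lesssim 2^{\gamma k_1}$, so
\begin{equation*}
\|Q_{HH}\|_{H^\gamma}\lesssim\sum_{k_1}2^{(\gamma+1)k_1}\|P_{k_1}u\|_{L^2}\|P_{k_1}u\|_{L^\infty}\lesssim\|u\|_{C_*^\rho}\sum_{k_1}2^{3k_1/2}\|P_{k_1}u\|_{L^2},
\end{equation*}
where I used $\gamma+1-\rho=3/2$; Cauchy--Schwarz then bounds the remaining sum by $\|u\|_{H^s}$ since $s>3/2$. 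Combining the two pieces yields the claim.

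The main obstacle I expect is the $\mathcal{B}_1$ verification for $\tilde m_{\mu\nu}$: one must check, at each of the three branches $\xi_1=0$, $\xi_2=0$, $\xi_1+\xi_2=0$, that the vanishing of $m_{\mu\nu}$ cancels that of $\Phi_{\mu\nu}$ with enough slack to leave a quotient smooth in the local variables $|\xi_j|^{1/2}$, $\xi_j/|\xi_j|$, and $\xi_{3-j}$ as required. Once this structural fact is in hand, the remainder is routine Coifman--Meyer and Littlewood--Paley accounting.
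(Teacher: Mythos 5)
Your proposal is correct, and it takes a genuinely different route from the paper's argument, though both rest on the same structural input from \cite{GMS}. The paper applies Theorem C.1 of \cite{GMS} to obtain an $L^p\times L^q\to L^2$ bound for the $\mathcal{\tilde B}_1$ multiplier (with $2<p,q<\infty$, $1/p+1/q=1/2$), reads it as $\|Q_{\mu\nu,1}\|_{H^\gamma}\lesssim\|u\|_{H^{\gamma+1,p}}\|u\|_{L^q}$ in Triebel--Lizorkin scales, and then chooses $p$ so that $H^{\gamma+1,p}$ and $L^q$ interpolate between the pairs $(C_*^\rho,L^2)$ and $(H^s,L^\infty)$; the numerology $\gamma+1-\rho=3/2$ makes the exponents match. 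You instead work block by block with a Coifman--Meyer-type $L^2\times L^\infty\to L^2$ estimate on each Littlewood--Paley piece, sum the high--low and high--high contributions directly, and close with $\rho>0$ and $s>3/2$. This is more elementary in the sense that the frequency accounting is explicit and no interpolation between function-space scales is needed; the price is that you must justify the dyadic bilinear estimate for the non-classical $\mathcal B_1$ symbol, and you correctly identify the analogue of Proposition C.3 in \cite{GMS} (the $L^1$ kernel bound for the frequency-truncated pieces of a $\mathcal B_s$ multiplier) as the right tool for that, rather than relying on ``standard Coifman--Meyer'' per se. Two small points worth tightening: in the high--low estimate the factor you want after summing $k_2\ll k_1$ is $\sum_{k_2}\|P_{k_2}u\|_{L^\infty}\lesssim\|u\|_{C_*^\rho}$ (which uses $\rho>0$) rather than $\|u\|_{L^\infty}$ literally; and in the high--high sum one should handle $k_1\le0$ separately (using $\|P_{k_1}u\|_{L^2}\le\|u\|_{L^2}$) before invoking Cauchy--Schwarz, since $\sum_{k_1\in\Z}2^{(3-2s)k_1}$ diverges at $-\infty$. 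Both are cosmetic. You are also right that the only genuinely nontrivial structural fact is $m_{\mu\nu}/\Phi_{\mu\nu}\in\mathcal B_1$; the paper simply cites Section 3 of \cite{GMS} for $m_{\mu\nu}\in\mathcal B_{3/2}$ and $\Phi_{\mu\nu}\in\mathcal B_{1/2}$ and invokes closure of the $\mathcal B_s$ algebra under division, so neither proof reproves this from scratch.
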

\begin{proof}
By Section 3 of \cite{GMS}, $m_{\mu\nu} \in \mathcal B_{3/2}$
and $\Phi_{\mu\nu} \in \mathcal B_{1/2}$. Hence $m_{\mu\nu}/\Phi_{\mu\nu} \in \mathcal B_1$, and can be decomposed as $m_1 + m_2$ where $m_j$ captures the contribution where the frequency of the $j$-th slot is bounded below by a (small) constant times the frequency of the other slot. Thus, for example, $m_1 \in \mathcal{\tilde B}_1$.

Let $Q_{\mu\nu,j}$ be the corresponding bilinear product. Then the multiplier of $\sqrt{1-\Delta}^\gamma Q_{\mu\nu,1}(\sqrt{1-\Delta}^{-\gamma-1}\cdot, \cdot)$ is of class $\mathcal B_0$ and by Theorem C.1 (i) of \cite{GMS} satisfies
\begin{equation}\label{Q-L2}
\|\sqrt{1-\Delta}^\gamma Q_{\mu\nu,1}(\sqrt{1-\Delta}^{-\gamma-1}f, g)\|_{L^2} \lesssim_{p,q} \|f\|_{L^o}\|g\|_{L^q}
\end{equation}
where $2 < p, q < \infty$ and $1/p + 1/q = 1/2$. Then
\begin{equation}
\|Q_{\mu\nu,1}(u, u)\|_{H^\gamma} \lesssim_{\gamma,p,q} \|u\|_{H^{\gamma+1,p}}\|u\|_{L^q}
\end{equation}
where $H^{s,p} = F^s_{p,2}$ is the Bessel potential space (see Section 2.3 of \cite{Tr}). Exchanging two slots gives the same bound for $Q_{\mu\nu,2}$, so the same bound holds for $Q_{\mu\nu}$.

Now let $s' = (s + \gamma + 1)/2 \in (\gamma + 1, s)$. Let $p = 4(s' - \rho)/3 > 2$. Then $H^{\gamma+1,p} = F^{\gamma+1}_{p,2} \supset B^{\gamma+1}_{p,2}$ interpolates between $B^\rho_{\infty,\infty} = C_*^\rho$ and $B^{s'}_{2,4/p} \supset H^s$, and $L^q$ interpolates in the same way between $L^2$ and $L^\infty$, we have
\begin{equation}
\|Q_{\mu\nu}\|_{H^\gamma} \lesssim_{s,\gamma} \|u\|_{C_*^\rho}\|u\|_{L^2} + \|u\|_{H^s}\|u\|_{L^\infty} \lesssim \|u\|_{C_*^\rho}\|u\|_{H^s}.
\end{equation}
\end{proof}

Now we let
\begin{equation}\label{FG-def}
F_s = \sup_{t\in[0,T]} \|u(t)\|_{H^s},\quad G = \|u\|_{L^4([0,T])C_*^\rho}.
\end{equation}

\begin{prop}\label{Q-L4Loo}
Let $s > \rho + 3/2 > 3/2$. Then
\begin{equation}
\|Q_{\mu\nu}(t) - e^{-it\Lambda}Q_{\mu\nu}(0)\|_{L_\tau^4([0,T])C_*^\rho}
\lesssim_{s,\rho} F_s^2 + GF_s.
\end{equation}
\end{prop}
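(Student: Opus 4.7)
The plan is to apply the triangle inequality to split the left-hand side into $\|Q_{\mu\nu}(t)\|_{L_\tau^4 C_*^\rho}$ and $\|e^{-it\Lambda}Q_{\mu\nu}(0)\|_{L_\tau^4 C_*^\rho}$ and estimate each separately, using Lemma \ref{Q-Hr} as the key black box to pass from the bilinear form to a controlled Sobolev norm, then bridging the remaining gap to $C_*^\rho$ either by a 1D Besov embedding (first piece) or by Lemma \ref{Strichartz} (second piece).

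For the first piece, I would apply Lemma \ref{Q-Hr} with $\gamma = \rho + 1/2$, whose hypothesis $s > \gamma + 1 = \rho + 3/2$ matches the hypothesis of the proposition exactly, obtaining the pointwise-in-time bound $\|Q_{\mu\nu}(t)\|_{H^{\rho+1/2}} \lesssim_{s,\rho} \|u(t)\|_{C_*^\rho}\|u(t)\|_{H^s}$. In one space dimension the Besov embedding $H^{\rho+1/2} = B^{\rho+1/2}_{2,2} \hookrightarrow B^\rho_{\infty,\infty} = C_*^\rho$ holds (equality of the differentiability-minus-integrability indices, and $2 \le \infty$ on the fine index), so the same bound controls $\|Q_{\mu\nu}(t)\|_{C_*^\rho}$. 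Taking the $L_\tau^4$ norm and using H\"older in time with the split $L^4_\tau \cdot L^\infty_\tau$ gives $\|Q_{\mu\nu}\|_{L_\tau^4 C_*^\rho} \lesssim \|u\|_{L_\tau^4 C_*^\rho}\|u\|_{L_\tau^\infty H^s} = G F_s$.

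For the second piece, Lemma \ref{Strichartz} at regularity $\rho$ gives $\|e^{-it\Lambda}Q_{\mu\nu}(0)\|_{L_\tau^4 C_*^\rho} \lesssim \|Q_{\mu\nu}(0)\|_{H^{\rho+3/8}}$. Since $\rho + 3/8 < \rho + 1/2$, I control this by $\|Q_{\mu\nu}(0)\|_{H^{\rho+1/2}}$ and apply Lemma \ref{Q-Hr} a second time (now at $t = 0$) to bound it by $\|u(0)\|_{C_*^\rho}\|u(0)\|_{H^s}$. The Sobolev embedding $H^s \hookrightarrow C_*^\rho$, available since $s > \rho + 1/2$, then bounds both factors by $F_s$, producing the $F_s^2$ contribution. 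Summing over the four choices of $(\mu,\nu) \in \{\pm\}^2$ absorbs the finitely many constants.

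There is no serious obstacle once Lemma \ref{Q-Hr} is in hand: the argument is essentially exponent bookkeeping. The $1/2$-derivative slack of the one-dimensional embedding $H^{\rho+1/2} \hookrightarrow C_*^\rho$ simultaneously absorbs the $3/8$-derivative loss of the Strichartz estimate and is exactly the loss delivered by Lemma \ref{Q-Hr}, so everything fits on the nose. The only point worth remarking on is that the strict inequality $s > \rho + 3/2$ in the proposition is precisely what lets the two invocations of Lemma \ref{Q-Hr} go through at the borderline exponent $\gamma = \rho + 1/2$.
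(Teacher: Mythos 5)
Your proposal is correct and matches the paper's proof essentially line for line: both split via the triangle inequality, bound $\|Q_{\mu\nu}\|_{C_*^\rho}$ by $\|Q_{\mu\nu}\|_{H^{\rho+1/2}}$ via the one-dimensional embedding and then invoke Lemma \ref{Q-Hr} at $\gamma = \rho + 1/2$ to close the first piece with H\"older in time, and both handle the second piece by Lemma \ref{Strichartz} followed by Lemma \ref{Q-Hr} at $t=0$ and the Sobolev embedding $H^s \hookrightarrow C_*^\rho$. The only difference is that you spell out the exponent bookkeeping that the paper leaves implicit.
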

\begin{proof}
On one hand,
\(
\|Q_{\mu\nu}\|_{C_*^\rho} \lesssim_\rho \|Q_{\mu\nu}\|_{H^{\rho+1/2}}
\lesssim_{s,\rho} \|u\|_{C_*^\rho}\|u\|_{H^s}
\)
by Lemma \ref{Q-Hr}, so the contribution of $Q_{\mu\nu}$ is controlled by $GF_s$.
On the other hand, by Lemma \ref{Strichartz} and Lemma \ref{Q-Hr},
the contibution of $e^{-it\Lambda}Q_{\mu\nu}(0)$ is controlled by
\begin{equation}
\|Q_{\mu\nu}(0)\|_{H^{\rho+1/2}} \lesssim_{s,\rho} \|u(0)\|_{C_*^\rho}\|u(0)\|_{H^s} \lesssim_{s,\rho} F_s^2
\end{equation}
from which we get the result.
\end{proof}

\subsection{Bounding the cubic bulk term $u_3$}
Note that in (\ref{N-def}), $B_3 = B - B_{\le2}$ as defined in (2.6.1) in \cite{AD}. Thus we have:

\begin{lem}[Proposition 2.6.1 in \cite{AD}]\label{B3-Hs}
Let $s, \gamma, \mu$ be such that $s - 1/2 > \gamma > 14$, $s \ge \mu \ge 5$ and $2\gamma \notin \Z$. Then for all $(h, |\nabla|^{1/2}\psi) \in H^{s+1/2} \times (C_*^{\gamma-1/2} \cap H^\mu)$ such that $\|h\|_{C_*^\gamma} \le c_{s,\gamma,\mu}$ is small enough,
\begin{equation}
\|B_3\|_{H^{\mu-1}} \lesssim_{s,\gamma,\mu} \|h\|_{C_*^\gamma}(\||\nabla|^{1/2}\psi\|_{C_*^{\gamma-1/2}}\|h\|_{H^s} + \|h\|_{C_*^\gamma}\||\nabla|^{1/2}\psi\|_{H^\mu}).
\end{equation}
\end{lem}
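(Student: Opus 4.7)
The plan is to reduce the statement to Proposition 2.6.1 in \cite{AD} by identifying $B_3$ with the cubic residual $B - B_{\le 2}$ defined in (2.6.1) of \cite{AD}, and then verifying the hypotheses. As a backup, I outline a self-contained derivation from the identities in (\ref{Gh-int}) combined with Lemmas \ref{Gh-Cr}--\ref{Gh-Hs2}.

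First I would verify the algebraic identity
\begin{equation*}
B_3 = G_3 - h'^2 B, \qquad G_3 := G(h)\psi - |\nabla|\psi + |\nabla|(h|\nabla|\psi) + (h\psi')',
\end{equation*}
which is immediate from the tautology $B(1 + h'^2) = G(h)\psi + h'\psi'$, the rearrangement $(h\psi')' = h'\psi' + h\psi''$, and the definition of $B_3$ in (\ref{N-def}). Thus $B_3$ is exactly what remains of $B$ after removing its linear and quadratic parts in $(h, \psi)$, matching $B - B_{\le 2}$ in \cite{AD}. The hypotheses then line up directly: the smallness $\|h\|_{C_*^\gamma} \le c_{s,\gamma,\mu}$ with $\gamma > 14$ provides the Besov-norm smallness used in \cite{AD}, and the regularities $h \in H^{s+1/2}$, $|\nabla|^{1/2}\psi \in H^\mu$ supply the high-norm inputs. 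The structural form of the right-hand side, namely one low-norm factor paired with one high-norm factor, times an extra low-norm factor of $\|h\|_{C_*^\gamma}$ coming from the cubic nature, is precisely the output of the paradifferential expansion of $B - B_{\le 2}$ performed in \cite{AD}.

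For a self-contained derivation, the term $h'^2 B$ is the easy half: Sobolev multiplication gives $\|h'^2 B\|_{H^{\mu-1}} \lesssim \|h'\|_{L^\infty}(\|h'\|_{L^\infty}\|B\|_{H^{\mu-1}} + \|h'\|_{H^{\mu-1}}\|B\|_{L^\infty})$, and Lemmas \ref{Gh-Cr} and \ref{Gh-Hs} then bound $\|B\|_{L^\infty}$ by $\||\nabla|^{1/2}\psi\|_{C_*^{\gamma-1/2}}$ and $\|B\|_{H^{\mu-1}}$ by $\||\nabla|^{1/2}\psi\|_{H^\mu}$ with exactly the right structure. The term $G_3$ is the hard half: I would write $G_3 = \int_0^1 (\partial_s G(sh)\psi - \partial_s G(sh)\psi|_{s=0})\,ds$ using the integral representation (\ref{Gh-int}), and expand the integrand using $B(sh)\psi - |\nabla|\psi$, $V(sh)\psi - \psi'$, and the action of $G(sh) - |\nabla|$ on $h B(sh)\psi$, each bounded via Lemma \ref{Gh-Hs2}.

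The main obstacle along the self-contained route is the bookkeeping of low/high frequency interactions in the expansion of $G_3$: one must ensure that exactly one of the three ``factors'' in each resulting term lands in the high Sobolev norm ($H^s$ or $H^\mu$) while the other two stay in the low Besov norms $C_*^\gamma$ or $C_*^{\gamma-1/2}$, so that the estimate is genuinely cubic with an extra factor of $\|h\|_{C_*^\gamma}$ to spare. This is precisely the accounting already carried out in Proposition 2.6.1 of \cite{AD}, which is why invoking that result is the efficient path.
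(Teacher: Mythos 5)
Your primary route—identifying $B_3$ with $B - B_{\le 2}$ from (2.6.1) in \cite{AD} and invoking Proposition 2.6.1 there—is exactly what the paper does; its entire argument is the one-line remark "Note that in (\ref{N-def}), $B_3 = B - B_{\le2}$ as defined in (2.6.1) in \cite{AD}." Your algebraic identity $B_3 = G_3 - h'^2B$ checks out and the self-contained sketch is a reasonable backup, but it is not needed and is not what the paper pursues.
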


\begin{lem}\label{N3-Hs}
Assume, besides the assumptions of Lemma \ref{B3-Hs}, further that $s \ge \mu + 1/2$ and $\|h\|_{H^s} \le c_s$ is small enough. Then
\(
\|N_3\|_{H^{\mu-1}} \lesssim_{s,\gamma,\mu} \|u\|_{C_*^\gamma}^2\|u\|_{H^s}.
\)
\end{lem}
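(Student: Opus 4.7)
The plan is to split $N_3$ into the four summands shown in (\ref{N-def}) and bound each in $H^{\mu-1}$ separately, systematically converting Sobolev/Besov norms of $h$ and $|\nabla|^{1/2}\psi$ into those of $u$ via the trivial inequalities $\|h\|_{C_*^\gamma}, \||\nabla|^{1/2}\psi\|_{C_*^{\gamma-1/2}} \lesssim \|u\|_{C_*^\gamma}$ (using $C_*^\gamma \hookrightarrow C_*^{\gamma-1/2}$) and $\|h\|_{H^s}, \||\nabla|^{1/2}\psi\|_{H^\mu} \lesssim \|u\|_{H^s}$ (using $\mu \le s$).

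The term $B_3$ is direct from Lemma \ref{B3-Hs}. For $h'^2 B$, a tame product estimate in $H^{\mu-1}$, combined with $\|B\|_{L^\infty} \lesssim \|u\|_{C_*^\gamma}$ (Lemma \ref{Gh-Cr}) and $\|B\|_{H^{\mu-1}} \lesssim \|u\|_{H^s}$ (Lemma \ref{Gh-Hs}), yields the required cubic estimate with no further input.

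The most delicate piece is $|\nabla|^{1/2}(B^2 - (|\nabla|\psi)^2)$: a naive triangle bound on $B^2$ and $(|\nabla|\psi)^2$ would produce only a single power of $\|u\|_{C_*^\gamma}$, which is one short of what we need. The remedy is to use the factorization $B^2 - (|\nabla|\psi)^2 = (B - |\nabla|\psi)(B + |\nabla|\psi)$ and bound it in $H^{\mu-1/2}$ (which absorbs the $|\nabla|^{1/2}$) by the tame product. The four ingredients I need are $\|B \pm |\nabla|\psi\|_{L^\infty}$ and $\|B \pm |\nabla|\psi\|_{H^{\mu-1/2}}$. The ``$+$'' bounds, $\lesssim \|u\|_{C_*^\gamma}$ and $\lesssim \|u\|_{H^s}$ respectively, follow from the triangle inequality together with Lemmas \ref{Gh-Cr} and \ref{Gh-Hs}. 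The ``$-$'' bound in $H^{\mu-1/2}$ follows from Lemma \ref{Gh-Hs2} with its $\mu$ replaced by $\mu+1/2$, which is exactly what the new hypothesis $s \ge \mu + 1/2$ allows, giving $\lesssim \|u\|_{C_*^\gamma}\|u\|_{H^s}$. The subtle one is the ``$-$'' bound in $L^\infty$: I apply Lemma \ref{Gh-Cr2} with its parameter $\gamma$ replaced by $\gamma - 1$, so the right-hand side only involves $\|h\|_{C_*^{\gamma-1}}$ and $\||\nabla|^{1/2}\psi\|_{C_*^{\gamma-1/2}}$ (rather than the unavailable $\||\nabla|^{1/2}\psi\|_{C_*^{\gamma+1/2}}$ demanded by a direct application), and is controlled by $\|u\|_{C_*^\gamma}^2$; the output $\|B - |\nabla|\psi\|_{C_*^{\gamma-2}}$ embeds into $L^\infty$ because $\gamma - 2 > 0$. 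The tame product then closes at $\|u\|_{C_*^\gamma}^2\|u\|_{H^s}$.

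For the quartic term $|\nabla|^{1/2}(h'^2 B^2)$, I group as $(h')^2 \cdot B^2$ and apply the tame product estimate in $H^{\mu-1/2}$. Each resulting summand inherits two factors of $h'$, so the smallness $\|h\|_{C_*^\gamma} \lesssim \|h\|_{H^s} \le c_s$ (Remark \ref{Gh-HsRem}) absorbs the extra power of $\|u\|_{C_*^\gamma}$ that would otherwise appear, leaving the claimed bound. The main obstacle is the third step: both the algebraic factorization of $B^2 - (|\nabla|\psi)^2$ and the careful choice to shift $\gamma$ down by one in Lemma \ref{Gh-Cr2} are essential to avoid a spurious half-derivative loss on $\psi$ that would be fatal to the estimate.
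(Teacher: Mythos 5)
Your proposal follows the paper's proof essentially verbatim: same decomposition of $N_3$ into $B_3$, $h'^2 B$, $|\nabla|^{1/2}(B^2-(|\nabla|\psi)^2)$, and $|\nabla|^{1/2}(h'^2 B^2)$; same appeal to Lemma \ref{B3-Hs} for $B_3$; same tame-product reasoning for the remaining pieces; and, crucially, the same factorization $B^2-(|\nabla|\psi)^2=(B+|\nabla|\psi)(B-|\nabla|\psi)$ fed into Lemmas \ref{Gh-Cr2} and \ref{Gh-Hs2} to get the quadratic $\|B-|\nabla|\psi\|$ bounds. Your explicit downward shift of the parameter $\gamma$ in Lemma \ref{Gh-Cr2} (to avoid the unavailable $C_*^{\gamma+1/2}$ norm of $|\nabla|^{1/2}\psi$) is just a cleaner statement of a step the paper performs implicitly with its shorthand Besov exponents, and your grouping $(h')^2\cdot B^2$ versus the paper's $h'\cdot(h'B^2)$ is an inessential variant.
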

\begin{proof}
Recall from (\ref{N-def}) that
\begin{equation}\label{N3-def}
N_3=B_3+h'^2B+\frac i2|\nabla|^{1/2}(B^2-(|\nabla|\psi)^2+h'^2B^2).
\end{equation}
The bound for $B_3$ has already been shown. For the other terms we use
the Sobolev multiplication theorem and the bounds in Section \ref{GhSec}.
In the following we write $X$ for $\|u\|_X$ and omit the dependence of constants on $s, \gamma, \mu$ to avoid clutter.
\begin{align*}
\|h'^2B\|_{H^{\mu-1}} &\lesssim \|h'\|_{L^\infty}(\|h'\|_{L^\infty}\|B\|_{H^{\mu-1}} + \|h'\|_{H^{\mu-1}}\|B\|_{L^\infty})\\
&\lesssim W^{1,\infty}(W^{1,\infty}H^{\mu-1/2} + H^\mu C_*^3)
\lesssim (C_*^3)^2H^\mu,\\
\|B^2 - (|\nabla|\psi)^2\|_{H^{\mu-1/2}}
&\lesssim \|B + |\nabla|\psi\|_{H^{\mu-1/2}}\|B - |\nabla|\psi\|_{L^\infty}\\
&+ \|B + |\nabla|\psi\|_{L^\infty}\|B - |\nabla|\psi\|_{H^{\mu-1/2}}\\
&\lesssim H^\mu (C_*^4)^2 + C_*^3(C_*^3H^s + C_*^4H^\mu)
\lesssim (C_*^4)^2H^s,\\
\|h'^2B^2\|_{H^{\mu-1/2}} &\lesssim \|h'B^2\|_{H^{\mu-1/2}}
\tag{by smallness of $\|h'\|_{H^{\mu-1/2}}$}\\
&\lesssim \|B\|_{L^\infty}(\|h'\|_{L^\infty}\|B\|_{H^{s-1/2}} + \|h'\|_{H^{s-1/2}}\|B\|_{L^\infty})\\
&\lesssim C_*^3(W^{1,\infty}H^\mu + H^{\mu+1/2}C_*^3)
\lesssim (C_*^3)^2H^s.
\end{align*}
\end{proof}

\begin{prop}\label{u3-L4Loo}
Let $s > \rho + 2 > 16$. Assume $\|h\|_{H^s} \le c_{s,\rho}$ is small enough. Then
\begin{equation}
\|u_3\|_{L^4([0,T])C_*^\rho} \lesssim_{s,\rho} \sqrt TG^2F_s.
\end{equation}
\end{prop}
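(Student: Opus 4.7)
The plan is to apply the Strichartz estimate (Lemma \ref{Strichartz}) to the Duhamel integral $u_3(t) = \int_0^t e^{-i(t-\tau)\Lambda}N_3(\tau)d\tau$ and then bound $N_3$ in an appropriate Sobolev norm by means of Lemma \ref{N3-Hs}, with a H\"older step in time to produce the $\sqrt T$ factor.

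First I would invoke Minkowski's integral inequality, viewing $t$ as the $L^4$ variable and $\tau$ as the outer integration variable, and then apply Lemma \ref{Strichartz} for each fixed $\tau$ (to the linear evolution $e^{-i(t-\tau)\Lambda}N_3(\tau)$) to obtain
\[
\|u_3\|_{L_t^4([0,T])C_*^\rho} \lesssim_\rho \int_0^T \|N_3(\tau)\|_{H^{\rho+3/8}} d\tau = \|N_3\|_{L_t^1([0,T])H^{\rho+3/8}}.
\]
H\"older in time then gives $\|N_3\|_{L_t^1 H^{\rho+3/8}} \le \sqrt T \|N_3\|_{L_t^2 H^{\rho+3/8}}$, so it remains to bound $\|N_3\|_{L_t^2 H^{\rho+3/8}}$ by $G^2 F_s$.

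To apply Lemma \ref{N3-Hs} I would set $\mu = \rho + 11/8$, so $\mu - 1 = \rho + 3/8$, and pick an auxiliary $\gamma \in (14, \rho)$ with $2\gamma \notin \Z$, which is possible since $\rho > 14$. The hypotheses of Lemma \ref{N3-Hs} then follow from $s > \rho + 2$: indeed $s - 1/2 > \rho + 3/2 > \gamma$, $s \ge \mu + 1/2 = \rho + 15/8$, and $\mu \ge 5$ is trivial. The smallness of $\|h\|_{C_*^\gamma}$ follows from Sobolev embedding (as in Remark \ref{Gh-HsRem}) and the assumed smallness of $\|h\|_{H^s}$. The Besov embedding $C_*^\rho \subset C_*^\gamma$ for $\gamma < \rho$ then gives
\[
\|N_3\|_{H^{\rho+3/8}} \lesssim_{s,\rho} \|u\|_{C_*^\gamma}^2\|u\|_{H^s} \lesssim \|u\|_{C_*^\rho}^2\|u\|_{H^s}.
\]

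Finally, taking $L_t^2$ norms and applying H\"older in time via $L_t^4 \cdot L_t^4 \cdot L_t^\infty \subset L_t^2$, I get
\[
\|N_3\|_{L_t^2([0,T]) H^{\rho+3/8}} \lesssim \|u\|_{L_t^4([0,T])C_*^\rho}^2 \|u\|_{L_t^\infty([0,T]) H^s} = G^2 F_s,
\]
and the proposition follows by chaining the bounds. None of the steps is delicate in isolation: the substantive estimate is Lemma \ref{N3-Hs}, and everything else is Strichartz, Minkowski, and H\"older. The main thing to verify carefully is just that the numerology works out, i.e.\ that the parameters $(\mu,\gamma)$ can be chosen simultaneously so as to satisfy the hypotheses of Lemma \ref{N3-Hs}; this is what forces the threshold $s > \rho + 2 > 16$, the $3/8$ loss of regularity being absorbed by the Strichartz step.
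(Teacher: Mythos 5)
Your proof is correct and follows essentially the same route as the paper: Minkowski/triangle inequality plus Lemma \ref{Strichartz} to reduce to $\|N_3\|_{L_t^1 H^{\rho+3/8}}$, Cauchy--Schwarz in time for the $\sqrt T$, and Lemma \ref{N3-Hs} with the $L_t^4\cdot L_t^4\cdot L_t^\infty$ splitting to land on $G^2F_s$. The only (cosmetic) difference is that the paper measures $N_3$ in $H^\gamma$ with $\gamma\in[\rho+1/2,\,s-3/2]$ rather than at the sharp exponent $\rho+3/8$; your choice $\mu=\rho+11/8$ and auxiliary Besov index in $(14,\rho)$ satisfies the same hypotheses of Lemma \ref{N3-Hs}.
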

\begin{proof}
Pick $\gamma \in [\rho + 1/2, s - 3/2]$ such that $2\gamma \notin \Z$. Then
\begin{align*}
\|u_3\|_{L^4([0,t])C_*^\rho} &\le \int_0^t \|e^{-i(t-\tau)\Lambda}N_3(\tau)\|_{L_t^4([\tau,T])C_*^\rho}d\tau \tag{by the triangle inequality}\\
&\lesssim_\rho \|N_3\|_{L^1([0,T])H_x^\gamma}
\tag{by Lemma \ref{Strichartz}}\\
&\le \sqrt T\|N_3\|_{L^2([0,T])H_x^\gamma}
\tag{by the Cauchy--Schwarz inequality}\\
&\lesssim_{s,\rho} \sqrt TG^2F_s.
\tag{by Lemma \ref{N3-Hs} and H\"older's inequality}
\end{align*}
\end{proof}

\subsection{Bounding the cubic bulk term $C_{\mu\nu}$}
\begin{prop}\label{C-L4Loo}
Let $s > \rho + 3 > 6.5$. Then
\begin{equation}
\left\| \int_0^t e^{-i(t-\tau)\Lambda}C_{\mu\nu}(\tau)d\tau \right\|_{L_t^4([0,T])C_*^\rho} \lesssim_{s,\rho} \sqrt TG^2F_s.
\end{equation}
\end{prop}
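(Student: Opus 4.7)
The approach closely mirrors that of Proposition \ref{u3-L4Loo}. I first apply Minkowski's inequality in time, then Lemma \ref{Strichartz} to each Duhamel slice and Cauchy--Schwarz in $\tau$:
\[
\left\|\int_0^t e^{-i(t-\tau)\Lambda}C_{\mu\nu}(\tau)\,d\tau\right\|_{L_t^4([0,T])C_*^\rho}
\le \int_0^T\|C_{\mu\nu}(\tau)\|_{H^{\rho+3/8}}\,d\tau
\le \sqrt{T}\,\|C_{\mu\nu}\|_{L_\tau^2([0,T]) H^{\rho+3/8}}.
\]
This reduces matters to a pointwise-in-time bilinear estimate of the form $\|C_{\mu\nu}(\tau)\|_{H^{\rho+3/8}} \lesssim \|u(\tau)\|_{C_*^\rho}^2\,\|u(\tau)\|_{H^s}$, after which H\"older's inequality in $\tau$ produces the desired $\sqrt{T}\,G^2 F_s$.

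For the pointwise estimate I recycle the multiplier analysis of Lemma \ref{Q-Hr}. The multiplier $m_{\mu\nu}/\Phi_{\mu\nu}$ is of class $\mathcal{B}_1$, and repeating the proof of that lemma but with one slot filled by $N_\mu$ or $N_\nu$ (in place of $u_\mu$ or $u_\nu$) yields
\[
\|C_{\mu\nu}\|_{H^r} \lesssim \|u\|_{C_*^\rho}\,\|N\|_{H^{s'}}
\qquad\text{for any } s' > r+1.
\]
I take $r=\rho+3/8$ and $s'$ any number slightly larger than $\rho+11/8$. It remains to show $\|N\|_{H^{s'}} \lesssim \|u\|_{C_*^\rho}\|u\|_{H^s}$ using the splitting $N=N_2+N_3$ from \eqref{N-def}. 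For the quadratic part $N_2$, whose multipliers lie in $\mathcal{B}_{3/2}$ by \eqref{multipliers}, I run the argument of Lemma \ref{Q-Hr} with the regularity shifted by a half-derivative, which succeeds as soon as $s>s'+3/2$, i.e., $s>\rho+3$ — exactly the hypothesis of the proposition. For the cubic part $N_3$, Lemma \ref{N3-Hs} gives the even sharper bound $\|N_3\|_{H^{s'}}\lesssim \|u\|_{C_*^\rho}^2\,\|u\|_{H^s}$, and the two halves combine to the desired pointwise bound.

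The main technical point is the bookkeeping of exponents: the Strichartz loss of $3/8$ forces $r$ slightly above $\rho$, which by the $\mathcal{B}_1$ calculus promotes $s'$ to slightly above $\rho+11/8$, which by the $\mathcal{B}_{3/2}$ calculus applied to $N_2$ requires $s$ slightly above $\rho+23/8$; this is where the hypothesis $s>\rho+3$ comes from. The remaining technical hypotheses (smallness of $\|h\|_{H^s}$ needed to invoke the Section \ref{GhSec} bounds, and the more stringent index conditions of Lemma \ref{N3-Hs}) are inherited from the assumptions of the main theorems and cause no additional trouble.
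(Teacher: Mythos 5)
Your reduction via Minkowski, Lemma \ref{Strichartz}, and Cauchy--Schwarz in $\tau$ is correct and is exactly what the paper does. But the heart of the proof --- the pointwise-in-time estimate on $\|C_{\mu\nu}\|_{H^r}$ --- contains a genuine gap. You assert that "repeating the proof of Lemma \ref{Q-Hr} with one slot filled by $N$" yields
\[
\|C_{\mu\nu}\|_{H^r}\ \lesssim\ \|u\|_{C_*^\rho}\,\|N\|_{H^{s'}},\qquad s'>r+1,
\]
i.e.\ a bound in which \emph{all} the derivatives land on the $N$ slot and only a H\"older norm is taken of $u$. This cannot come out of the bilinear multiplier estimate. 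Theorem C.1(i) of \cite{GMS} applied to the $\mathcal{B}_1$-class multiplier $m_{\mu\nu}/\Phi_{\mu\nu}$ gives, after splitting into high-low and low-high frequency interactions,
\[
\|C_{\mu\nu}\|_{H^r}\ \lesssim\ \|u\|_{H^{r+1,p}}\,\|N\|_{L^q}\ +\ \|N\|_{H^{r+1,p}}\,\|u\|_{L^q},
\]
and in the first term the full $r+1$ derivatives fall on $u$, not $N$. Since $r+1>\rho$ (indeed $r=\rho+3/8$), there is no way to dominate $\|u\|_{H^{r+1,p}}$ by $\|u\|_{C_*^\rho}$ alone --- a high-frequency wave packet in $u$ against a low-frequency $N$ ruins the asymmetric bound you claim. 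Consequently the exponent count that follows ("$\mathcal{B}_1$ promotes $s'$ to $\rho+11/8$, $\mathcal{B}_{3/2}$ then forces $s>\rho+23/8$") rests on a false premise.

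What the paper does instead is keep both terms and interpolate each of the four factors: it bounds $\|u\|_{H^{\rho+3/2,p}}$ and $\|u\|_{L^q}$ between $C_*^\rho$ and $H^s$/$L^2$, and separately bounds $\|N\|_{L^q}$ and $\|N\|_{H^{\rho+3/2,p}}$ between $L^\infty$/$C_*^{\rho-3/2}$ and $L^2$/$H^{s-3/2}$ (the latter requiring $\|N\|_{C_*^{\rho-3/2}}\lesssim\|u\|_{C_*^\rho}^2$ and $\|N\|_{H^{s-3/2}}\lesssim\|u\|_{C_*^\rho}\|u\|_{H^s}$ from Lemmas \ref{Gh-Cr2} and \ref{Gh-Hs2}); the interpolation exponents are then balanced so that both terms land on $\|u\|_{C_*^\rho}^2\|u\|_{H^s}$. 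Your proposal skips this balancing entirely, and without it the argument does not close. You could salvage the proof by replacing your single-factor bound with the two-term estimate above and then running the same interpolation as in Lemma \ref{Q-Hr} on \emph{both} factors of \emph{both} terms, which would bring you back to the paper's argument.
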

\begin{proof}
Aa in the proof of Proposition \ref{u3-L4Loo}, it suffices to show that
\begin{equation}\label{C-Hr3}
\|C_{\mu\nu}\|_{H^{\rho+1/2}} \lesssim_{s,\rho} \|u\|_{C_*^\rho}^2\|u\|_{H_s}.
\end{equation}
By (\ref{Q-L2}),
\begin{equation}\label{C-Hr2}
\|C_{\mu\nu}\|_{H^{\rho+1/2}} \lesssim_{\rho,p,q} \|N\|_{L^q}\|U\|_{H^{\rho+3/2,p}} + \|U\|_{L^q}\|N\|_{H^{\rho+3/2,p}}
\end{equation}
where $1/p + 1/q = 1/2$. By Lemma \ref{Gh-Hs2}, Lemma \ref{Gh-Cr2} and the expression for $N$ in (\ref{N-def}),
\begin{equation}\label{N-Lq}
\|N\|_{L^q} \le \|N\|_{L^2}^{2/q}\|N\|_{L^\infty}^{1-2/q}
\lesssim \|u\|_{C_*^\rho}^{2-2/q}\|u\|_{H^s}^{2/q}.
\end{equation}
By interpolation (see the proof of Lemma \ref{Q-Hr}), for $p = 2(s - \rho)/3 > 2$,
\begin{align}
\label{u-Hr}
\|u\|_{H^{\rho+3/2,p}} &\le \|u\|_{H^{\rho+3,p}}
\lesssim_{s,\rho} \|u\|_{C_*^\rho}^{2/q}\|u\|_{H^s}^{1-2/q},\\
\label{u-Lq}
\|u\|_{L^q} &\le \|u\|_{L^2}^{2/q}\|u\|_{L^\infty}^{1-2/q}
\le \|u\|_{H^s}^{2/q}\|u\|_{C_*^\rho}^{1-2/q}.
\end{align}
Again by interpolation, Lemma \ref{Gh-Cr2} and Lemma \ref{Gh-Hs2},
\begin{equation}\label{N-Hr}
\|N\|_{H^{\rho+3/2,p}} \lesssim_{s,\rho} \|N\|_{C_*^{\rho-3/2}}^{2/q}\|N\|_{H^{s-3/2}}^{1-2/q} \lesssim_{s,\rho} \|u\|_{C_*^\rho}^{1+2/q}\|u\|_{H^s}^{1-2/q}.
\end{equation}
Combining all the four bounds above shows the result.
\end{proof}

\begin{prop}\label{StrEst}
Let $s > \rho + 3 > 17$. Then $G \lesssim_{s,\rho} F_s + F_s^2 + GF_s + \sqrt TG^2F_s$.
\end{prop}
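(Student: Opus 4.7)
The plan is to take the $L^4([0,T])C_*^\rho$ norm of the Duhamel formula (\ref{Duhamel}) for $u$, after rewriting the quadratic contribution $u_2$ via the integration-by-parts identity (\ref{IBP}). This expresses $u(t)$ as a sum of four kinds of terms: the linear flow $e^{-it\Lambda}u(0)$, the quadratic boundary pieces $Q_{\mu\nu}(t) - e^{-it\Lambda}Q_{\mu\nu}(0)$, the cubic bulk correction $\int_0^t e^{-i(t-\tau)\Lambda}C_{\mu\nu}(\tau)\,d\tau$, and the genuinely cubic remainder $u_3(t)$. By the triangle inequality in $L^4_tC_*^\rho$, it suffices to estimate each group separately.

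For the linear flow I will apply Lemma \ref{Strichartz} at regularity $\rho$ to obtain
\[
\|e^{-it\Lambda}u(0)\|_{L^4_tC_*^\rho} \lesssim_\rho \|u(0)\|_{H^{\rho+3/8}} \le \|u(0)\|_{H^s} \le F_s,
\]
which is legitimate because $s > \rho + 3 > \rho + 3/8$; this produces the $F_s$ summand on the right-hand side of the claimed inequality.

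The other three groups of terms are precisely the quantities controlled by Propositions \ref{Q-L4Loo}, \ref{C-L4Loo} and \ref{u3-L4Loo}, which give respectively the bounds $F_s^2 + GF_s$ (for the boundary pieces), $\sqrt TG^2F_s$ (for the $C_{\mu\nu}$ correction), and $\sqrt TG^2F_s$ (for $u_3$). Their index hypotheses ($s > \rho + 3/2$, $s > \rho + 3 > 6.5$ and $s > \rho + 2 > 16$ respectively) are all implied by the single assumption $s > \rho + 3 > 17$, and the smallness of $\|h\|_{H^s}$ required in Proposition \ref{u3-L4Loo} is part of the standing setup. Summing the four contributions yields exactly $F_s + F_s^2 + GF_s + \sqrt TG^2F_s$.

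The proposition is thus a bookkeeping synthesis of the four preceding estimates, assembled through Duhamel's formula and one integration by parts in time; no new analytic input is needed. The only step requiring attention is checking that the various regularity thresholds of the input bounds are jointly consistent with $s > \rho + 3 > 17$, which is immediate. In particular, the $3/8$ loss in Lemma \ref{Strichartz} is absorbed harmlessly by the comfortable gap $s - \rho > 3$.
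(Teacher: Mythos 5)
Your proposal is correct and takes precisely the same route as the paper: the paper's own proof is a one-line synthesis citing Duhamel's formula (\ref{Duhamel}), Lemma \ref{Strichartz} for the linear flow, and Propositions \ref{Q-L4Loo}, \ref{u3-L4Loo}, \ref{C-L4Loo} for the three nonlinear pieces. Your write-up merely spells out the bookkeeping that the paper leaves implicit, including the check that the single index hypothesis $s > \rho + 3 > 17$ covers all the constituent thresholds.
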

\begin{proof}
This follows from (\ref{Duhamel}), Lemma \ref{Strichartz}, Proposition \ref{Q-L4Loo}, Proposition \ref{u3-L4Loo} and Proposition \ref{C-L4Loo}.
\end{proof}

\section{The Euclidean case: proof of Theorem \ref{Thm1}}\label{Thm1prf}
\begin{proof}[Proof of Theorem \ref{Thm1}]
Since the water wave equation is locally wellposed (see \cite{Wu2DL} for example), we only need to show {\it a priori} estimates that can be closed.

Recall from (\ref{Es-def}) and (\ref{FG-def}) that
\begin{align}
E_s(t) &= \|h(t)\|_{H^s} + \||\nabla|^{1/2}w(t)\|_{H^s},\\
F_s &= \sup_{t\in[0,T]} \|u(t)\|_{H^s},\quad G = \|u\|_{L^4([0,T])C_*^\rho}.
\end{align}
We assume $s > \rho + 3.5 > 17.5$ and the following assumptions:

\begin{enumerate}
\item $(h, |\nabla|^{1/2}\psi) \in C([0, T], H^s \times H^{s-1/2}$ and $|\nabla|^{1/2}w \in C([0, T], H^s)$,

\item $E_s(0) \le \ep$,

\item $E_s(t) \le 10\ep$ for all $t \in [0, T]$, and

\item $G \le A\ep$ (where $A$ is a constant to be determined later),
\end{enumerate}
of which the last two need to be closed. By Proposition \ref{EneEst},
\begin{equation}\label{Es2-bound}
E_s(t)^2 \le 81E_s(0)^2 + C_{s,\rho}G^2\ep^2
\le 81\ep^2 + C_{s,\rho}A^2\ep^4
\end{equation}
so
\begin{equation}\label{Es-bound}
E_s(t) \le 9\ep + C_{s,\rho}A\ep^2 \le 9.5\ep
\end{equation}
provided that $\ep \le 0.5/C_{s,\rho}A$, closing the bootstrap assumption on $E_s$.

For $G$ we have, by Proposition \ref{StrEst} (with $s - 1/2$ in place of $s$),
\begin{equation}\label{G-bound}
G \lesssim_{s,\rho} F_{s-1/2} + F_{s-1/2}^2 + GF_{s-1/2} + \sqrt TG^2F_{s-1/2}.
\end{equation}
By (\ref{psi-Hs}),
\begin{equation}\label{F-bound}
F_{s-1/2} \lesssim_s \sup_{[0,T]} E_s \le 10\ep.
\end{equation}
Putting (\ref{F-bound}) and bootstrap assumption on $G$ ($G \le A\ep$) in (\ref{G-bound}) gives
\begin{equation}\label{G-bound2}
G \le C_{s,\rho}(\ep + \ep^2 + A\ep^2 + \sqrt TA^2\ep^3).
\end{equation}
Now let $A = A_{s,\rho} = \max(8C_{s,\rho}, 2)$. Then
\begin{equation}
C_{s,\rho}A_{s,\rho}\ep^2 < \ep/2 \le A_{s,\rho}\ep/4, \ C_{s,\rho}\ep^2 < \ep/4 \le A_{s,\rho}\ep/8, \ C_{s,\rho}\ep \le A_{s,\rho}\ep/8
\end{equation}
so (\ref{G-bound2}) becomes
\begin{equation}\label{G-bound3}
G < A_{s,\rho}\ep/2 + C_{s,\rho}A_{s,\rho}^2\sqrt T\ep^3.
\end{equation}
Now we choose
\begin{equation}\label{T-def}
T = \frac{1}{9C_{s,\rho}^2A_{s,\rho}^2\ep^4}
\end{equation}
so that $G < A_{s,\rho}\ep/2 + A_{s,\rho}\ep/3 = 5A_{s,\rho}\ep/6$, closing the bootstrap assumption on $G$.
\end{proof}

\section{The periodic case: proof of Theorem \ref{Thm2}}\label{Thm2prf}
For water waves with a period of $R$, the energy estimate is not affected,
while the Strichartz estimate becomes the following:

\begin{lem}\label{Strichartz-R}
For $s \in \R$, $\|e^{-it\Lambda}u\|_{L^4_t([0,T])C_*^s} \lesssim_s \sqrt[4]{1 + T/R}\|u\|_{H^{s+3/8}}$.
\end{lem}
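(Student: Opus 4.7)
The plan is to prove a local-in-time Strichartz estimate on time intervals of length $R$ with the same constant as the Euclidean Lemma \ref{Strichartz}, and then assemble the claim by $\ell^4$ summation over subintervals; this summation is exactly where the $(1+T/R)^{1/4}$ factor comes from.

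For the local estimate, I would establish
\begin{equation*}
\|e^{-it\Lambda}v\|_{L^4([0,R])C_*^s} \lesssim_s \|v\|_{H^{s+3/8}}
\end{equation*}
for every $R$-periodic $v$, by repeating the proof of Lemma \ref{Strichartz}: Littlewood--Paley decompose and reduce to the dyadic bound $\|P_k e^{-it\Lambda}v\|_{L^4([0,R])L^\infty_x} \lesssim 2^{3k/8}\|P_k v\|_{L^2}$, then run the standard $TT^*$ plus Hardy--Littlewood--Sobolev argument in time. The only input needing verification is the pointwise dispersive estimate $|K_{k,t}^{\mathrm{per}}(x)| \lesssim 2^{3k/4}|t|^{-1/2}$ for the convolution kernel of $P_k e^{-it\Lambda}$ on the $R$-periodic line. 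By Poisson summation this kernel equals $\sum_{m\in\Z} K_{k,t}(x+mR)$ with $K_{k,t}$ the Euclidean kernel; stationary phase shows $K_{k,t}$ is essentially supported where $|x|\sim 2^{-k/2}|t|$ with peak size $\lesssim 2^{3k/4}|t|^{-1/2}$, so its support has width $\lesssim 2^{-k/2}|t|$. For $|t|\le 2^{k/2}R$ this width does not exceed $R$, hence only $O(1)$ translates overlap at any point and the Euclidean dispersive bound transfers to the torus with the same constant.

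Given the local estimate, split $[0,T]$ into $N \le 1+T/R$ subintervals $I_j$ of length at most $R$. Since $e^{-it\Lambda}$ is an isometry of $H^{s+3/8}$, applying the local estimate with initial datum $u(t_j)$ on each $I_j$ yields $\|e^{-it\Lambda}u\|_{L^4(I_j)C_*^s} \lesssim_s \|u\|_{H^{s+3/8}}$. Summing in $\ell^4$,
\begin{equation*}
\|e^{-it\Lambda}u\|_{L^4([0,T])C_*^s}^4 = \sum_{j=1}^{N} \|e^{-it\Lambda}u\|_{L^4(I_j)C_*^s}^4 \lesssim_s (1+T/R)\|u\|_{H^{s+3/8}}^4,
\end{equation*}
and taking fourth roots proves the lemma.

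The main obstacle is the low-frequency range $2^k\in(1/R,1)$: there the Poincar\'e recurrence time $2^{k/2}R$ drops below $R$, so the dispersive argument above only gives the Euclidean Strichartz bound on subintervals of $[0,R]$ of length $2^{k/2}R$, not on all of $[0,R]$. However this is only a logarithmic (in $R$) number of dyadic scales, and for each of them one further subdivides $[0,R]$ into $\lceil 2^{-k/2}\rceil$ pieces of length $2^{k/2}R$, applies Strichartz on each, and sums in $\ell^4$; the extra factor $2^{-k/8}$ combines with the Sobolev loss $2^{3k/8}$ to a geometrically summable $2^{k/4}$, so the Besov estimate still closes with the Euclidean constant.
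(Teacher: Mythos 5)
Your proof is correct and rests on the same key idea as the paper's: the $R$-periodic dyadic dispersive/Strichartz bound agrees with the Euclidean one up to the frequency-dependent recurrence time $\approx 2^{k/2}R$, after which one subdivides the time interval into recurrence-length pieces, applies the Euclidean estimate on each piece, and sums in $\ell^4$ using the $L^2$-isometry of the propagator. The only difference is organizational --- the paper partitions $[0,T]$ directly into $O(1 + 2^{-k/2}T/R)$ pieces of length $2^{k/2}R$ for each $k$, arriving at the per-frequency loss $2^{k/4+\max(k,0)/8}\sqrt[4]{1+T/R}$, whereas you first partition into $R$-length pieces and then subdivide further only in the low-frequency range $2^k<1$ --- and your Poisson-summation justification of the recurrence heuristic is a welcome detail that the paper leaves at the informal wave-packet level.
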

\begin{proof}
Since wave packets of frequencies $\xi \approx 2^k$ travel at speed $\Lambda'(\xi) \approx \xi^{-1/2} \approx 2^{-k/2}$, they do not reach the boundary and wrap around until time $\approx 2^{k/2}R$. Hence for time $T \lesssim 2^{k/2}R$, the period estimate is the same as the Euclidean one.
Longer time periods can be partitioned into $O(1 + 2^{-k/2}T/R)$ segments of length $2^{k/2}R$. Since the evolution conserves the $L^2$ norm, in each segment the estimate above holds, which then adds up to
\begin{equation}
\begin{aligned}
\|P_ke^{-it\Lambda}u\|_{L_t^4([0,T])L_x^\infty}
&\lesssim 2^{3k/8}\sqrt[4]{1 + 2^{-k/2}T/R}\|u\|_{L^2}\\
&\lesssim 2^{k/4 + \max(k,0)/8}\sqrt[4]{1 + T/R}\|u\|_{L^2}.
\end{aligned}
\end{equation}
Summing over $k \in \Z$ shows the result.
\end{proof}

Now we are ready to prove Theorem \ref{Thm2}.
\begin{proof}[Proof of Theorem \ref{Thm2}]
Using the same notation $E_s(t)$, $F_s$ and $G$ as before we have
\begin{align}
\label{Es-bound-R}
E_s(t)^2 &\le 81E_s(0)^2 + C_{s,\rho}\ep^2G^2,\\
\label{G-bound-R}
G &\lesssim_{s,\rho} \sqrt[4]{1 + T/R}(F_{s-1/2} + F_{s-1/2}^2 + \sqrt TG^2F_{s-1/2}) + GF_{s-1/2},\\
\label{F-bound-R}
F_{s-1/2} &\lesssim_s \sup_{[0,T]} E_s.
\end{align}

If $R > \ep^{-4}$ then we use the same bootstrap assumptions as the Eulidean case, which show a lifespan of $T \approx_{s,\rho} \ep^{-4}$.
Since $T/R \lesssim_{s,\rho} 1$, the extra factor $\sqrt[4]{1 + T/R}$ can be safely ignored.

If $\ep^{-2} < R \le \ep^{-4}$ then our bootstrap assumptions are $E_s(t) \lesssim \ep$ ($t \in [0, T]$) and $G \lesssim \sqrt\ep/\sqrt[8]{R}$. Since $G \lesssim \ep^{3/4}$ is small enough, the assumption on $E_s$ can be closed. For $G$ the dominant terms are (note that $F_{s-1/2} \lesssim_s \ep$ by (\ref{F-bound-R}))
\begin{equation}\label{G-dom}
\sqrt[4]{1 + T/R}\cdot\ep\text{ and }\sqrt[4]{1 + T/R}\cdot\ep\sqrt TG^2
= \sqrt[4]{1 + T/R}\cdot\ep^2\sqrt T/\sqrt[4]{R}.
\end{equation}
An easy computation shows that the estimate can be closed up to the lifespan
\begin{equation}\label{TR-def}
T \approx_{s,\rho} \sqrt R/\ep^2.
\end{equation}

If $1 \le R \le \ep^{-2}$ then we resort to \cite{Wu2DL}, noting that that result carries over to the periodic case.
\end{proof}

\end{document}